 \newtheorem{thm}[equation]{Theorem}
\newtheorem{theorem}[equation]{Theorem}
 \newtheorem{prop}[equation]{Proposition}
 \newtheorem{cor}[equation]{Corollary}
 \newtheorem{lemma}[equation]{Lemma}
 \theoremstyle{definition}
 \newtheorem{remark}[equation]{Remark}
 \DeclareMathOperator{\Gl}{Gl}
 \DeclareMathOperator{\Sl}{Sl}
 \newcommand{\tens} {\otimes}
 \newcommand{\field}[1] {\mathbb #1}
 \newcommand{\Z} {\field Z}
 \newcommand{\C} {\field C}
 \newcommand{\F} {{\field F\:\!}}
 \newcommand{\A}{{\mathcal A}}
\newcommand{\om}{\omega}
\newcommand{\codim}{\text{codim}}
\newcommand{\vol}{\text{\it vol}}
\newcommand{\Om}{\Omega}
\begin{document}

\title[Reflection groups and Differential Forms]
{Reflection Groups and Differential Forms}
%%%%%%%%%%%%%%%%%%%%%%%%%%%%%%%%%%%%%%%%%%%%%%%%%%%%%%%%%%%%%%%%%%%%%%%%%%%%%%%%%
%%%%%

\author[Julia Hartmann]{Julia Hartmann}
\address{IWR\\ University of Heidelberg\\Im Neuenheimer Feld 368, 69121 Heidelberg, Germany}
\email{julia.hartmann@iwr.uni-heidelberg.de}
\author[Anne V. Shepler]{Anne V. Shepler}
\address{Dept.\ of Mathematics\\
         University of North Texas\\
         P.O. Box 311430,
         Denton, TX, 76203}
\email{ashepler@unt.edu}
\keywords{Invariant theory, semi-invariants, differential forms, modular, reflection group, hyperplane arrangement, pointwise stabilizer}

\thanks{Work of first author partially supported by DFG 
(German National Science Foundation).
Work of second author partially supported by National Science Foundation grant \#DMS-0402819 and National Security Agency grant MDA904-03-1-0005.}

\begin{abstract}
We study differential forms invariant under a finite reflection group over a field of arbitrary characteristic. In particular, we prove an analogue of Saito's freeness criterion for invariant differential 1-forms. We also discuss how twisted wedging endows the invariant forms with the structure of a free exterior algebra in certain cases.
Some of the results are extended to the case of relative invariants with respect to a linear character.

\end{abstract}

\maketitle

%%%%%%%%%%%%%%%%%%%%%%%%%%%%%%%%%%%%%%%%%%%%%%%%%%%%%%%%%%%%%%%%%%%%%%%%%%%%%%%%%%%%%%%%%%%%%%%%%%%%%
%%%%%%%%%%%%%%%%%%%%%%%%%%%%%%%%%%%%%%%%%%%%%%%%%%%%%%%%%%%%%%5
%%%%%%%%%%%%%%%%%%%%%%%%%%%%%%%%%%%%%%%%%%%%%%%%%%%%%%%%%%%%%5
\section{Introduction}
\label{introduction}

\vspace{3ex}

The classical study of reflection groups (in characteristic zero) 
uses the theory of hyperplane arrangements to exhibit natural structures 
on the sets of invariant polynomials, derivations, and differential forms.
In this article, we begin a theory 
linking hyperplane arrangements and invariant forms
for reflection groups over arbitrary fields.

Let
 $V$ be an $n$-dimensional vector space over a field $\F$,
and let $G\leq \Gl_n(\F)$ be a finite group.
Let $\F[V]= S(V^*)$, the ring of polynomials on $V$. 
We consider the $\F[V]$-modules of differential $k$-forms,
$$
 \Om^k  := \F[V]\otimes \Lambda^k (V^*),
$$
and more generally, the module of differential forms, $\Om := \bigoplus_k \ \Om^k$. 
We are interested in the invariants under the action of $G$ on these modules.
Invariant (and relatively invariant) differential forms have applications to various areas of mathematics, for example, dynamical systems
(see  \cite{DoyleMcMullen} and \cite{CrassDoyle}), group cohomology (see \cite{Adem}),
symplectic reflection algebras and Hecke algebras (see
\cite{EtingofGinzburg} and \cite{SheplerWitherspoon}),
topology of complement spaces (see \cite{orliksolomon} and \cite{shepler05}), and Riemannian manifolds (see \cite{MichorI}).

We determine the rank of $(\Om^1)^G$ over an arbitrary field
in Theorem~\ref{rank}.
We then restrict our attention to the case when $G$ is generated by reflections.
When the characteristic of $\F$ is coprime to the group order (the {\bf nonmodular} case),
the ring of invariant polynomials $\F[V]^G$ forms a polynomial algebra:
$\F[V]^G=\F[f_1,\ldots,f_n]$ for some algebraically independent polynomials $f_i$ in $\F[V]$.
We say that $G$ has {\bf polynomial invariants} if $\F[V]^G$ has this form and we call
the polynomials $f_i$ {\bf basic invariants}.
Solomon~\cite{solomon} showed that 
the exterior derivatives $df_1, \ldots, df_n$
generate the set of invariant differential forms $\Om^G$
as an exterior algebra 
in the nonmodular setting:
$$\Om^G=\F[f_1,\ldots,f_n]\otimes\Lambda(df_1,\ldots,df_n).$$  

In the modular case, when the characteristic of $\F$ divides the group order,
a reflection group may fail to have polynomial invariants.  Even when the invariants
$\F[V]^G$ do form a polynomial algebra, the exterior derivatives $df_1, \ldots,
df_n$ of basic invariants $f_1, \ldots, f_n$ may fail to generate $(\Om^1)^G$ as an
$\F[V]^G$-module, in which case they will certainly not generate $\Om^G$ as an
algebra under the usual wedging of forms (see Section~\ref{examples} for an example). 
Hartmann \cite{hartmann} showed that Solomon's theorem holds for groups with
polynomial invariants 
if and only if the group $G$ 
contains no transvections.  

We investigate the invariant theory in the general case (when $G$ may contain
transvections) and explore two fundamental questions:
\begin{itemize}
\item[(1)]
When is the module of invariant 1-forms  free over the invariant ring $\F[V]^G$?
\item[(2)]
When is the module of invariant forms a free exterior algebra, 
i.e., 
when is $\Om^G=\F[V]^G\otimes\Lambda(\om_1,\ldots,\om_n)$ for some
1-forms $\om_i$?
\end{itemize}
In Theorem~\ref{solomon}, we address the first question by giving a criterion
for when a set of invariant 1-forms generates $(\Om^1)^G$ as a free module over 
 $\F[V]^G$.
In Theorem~\ref{freealgebra}, we show that a maximality condition on the root spaces allows one
to endow $\Om^G$ with the structure of a free exterior algebra.
The exterior algebra structure emerges from  a twisted wedging operator 
introduced (for nonmodular groups) by Shepler \cite{shepler}. (Beck \cite{beck} uses twisted wedging to extend results in a different direction 
in the nonmodular case.) 
In Section~\ref{chi}, we generalize Theorem~\ref{solomon} to the case of forms which are invariant with respect to a linear character of the group $G$.  

We include a special analysis for three classes of reflection groups:
groups fixing a single hyperplane pointwise,
groups containing the special linear group,
and unipotent groups. In all these cases,
we observe that $(\Om^1)^G$ is free as an $\F[V]^G$-module.   
Furthermore, our examples suggest a strategy for producing generators from the
exterior derivatives $df_1, \ldots, df_n$ of a choice of basic invariants
$f_1, \ldots, f_n$ for $G$, and this strategy 
is related to the geometry of the reflecting hyperplanes (see
Section~\ref{examples}). It is therefore natural to ask whether $(\Om^1)^G$ is
{\em always} a free $\F[V]^G$-module when $G$ is a reflection group.  
This question remains open.

\vspace{1ex}

{\bf Note:} Since the set of differential forms in characteristic $2$ is a truncated polynomial algebra, we exclude that case from our considerations throughout.

%%%%%%%%%%%%%%%%%%%%%%%%%%%%%%%%%%%%%%%%%%%%%%%%%%%%%%%%%%%%%%%%%%%%%%%%%%%%%%%%
\section{The rank of invariant differential forms}
%%%%%%%%%%%%%%%%%%%%%%%%%%%%%%%%%%%%%%%%%%%%%%%%%%%%%%%%%%%%%%%%%%%%%%%%%%%%%%%%
Before restricting to the case of reflection groups, we compute the rank of
$(\Om^1)^G$ when $G\leq \Gl_n(\F)$ is an arbitrary finite group. 
Recall that the rank of a module $M$ over an
integral domain $R$ is defined as the maximal number of $R$-linearly independent
elements of $M$. It equals the dimension of $F\tens_R M$, where $F$ is the field
of fractions of $R$.

\begin{thm}\label{rank}
Let $\F$ be a field and let $G\leq \Gl_n(\F)$ be a finite group. Then
$(\Om^1)^G$ has rank $n$.
\end{thm}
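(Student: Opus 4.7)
The plan is to reduce the rank computation to Galois descent applied to the $\F(V)$-vector space $\F(V) \otimes_\F V^*$. Since $G \leq \Gl_n(\F)$ acts faithfully on $V$, it acts faithfully by field automorphisms on $\F(V)$, and Artin's theorem gives that $\F(V)/\F(V)^G$ is a finite Galois extension with Galois group $G$. I would also record the standard fact that $\F(V)^G = \Quot(\F[V]^G)$: given $f/g \in \F(V)^G$, multiplying numerator and denominator by $\prod_{h \ne 1} h(g)$ replaces the denominator by the $G$-invariant polynomial $\prod_{h \in G} h(g)$, and invariance of the quotient then forces the new numerator to be $G$-invariant as well.

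Next, I would equip the $n$-dimensional $\F(V)$-space $W := \F(V) \otimes_\F V^*$ with the semilinear $G$-action $g \cdot (h \otimes v^*) = g(h) \otimes (g\cdot v^*)$ and invoke Galois descent for $\Gl_n$ (Speiser's theorem, equivalently the vanishing of $H^1(G,\Gl_n(\F(V)))$). This produces a natural isomorphism $\F(V) \otimes_{\F(V)^G} W^G \iso W$, and in particular $\dim_{\F(V)^G} W^G = n$. The crucial point is that this step is valid in arbitrary characteristic, so no issue arises from $|G|$ possibly failing to be invertible in $\F$.

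It then remains to identify $W^G$ with the localization $\Quot(\F[V]^G) \otimes_{\F[V]^G} (\Om^1)^G$, which will show that the rank of $(\Om^1)^G$ over $\F[V]^G$ coincides with the $\F(V)^G$-dimension of $W^G$. The inclusion of the localization into $W^G$ is automatic; for the reverse, given $\om \in W^G$, I would take any common denominator $f \in \F[V]$ so that $f\om \in \Om^1$, and then replace $f$ by $N := \prod_{g \in G} g(f)$, which is $G$-invariant and still a common denominator. Then $N\om \in (\Om^1)^G$ and $\om = (N\om)/N$ lies in the localization.

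The principal obstacle, given that the paper targets the modular setting, is that one cannot split the surjection $\Om^1 \to (\Om^1)^G$ by a Reynolds operator. This is precisely why Galois descent—characteristic-free via Speiser's theorem—is the right tool, rather than an averaging argument; once descent is in hand, the remainder is bookkeeping with denominators.
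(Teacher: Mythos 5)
Your proof is correct, and it takes a genuinely different route from the paper's. The paper splits the statement into two inequalities: for the upper bound it shows that $\F[V]^G$-independent invariant $1$-forms remain $\F(V)$-independent via a shortest-relation argument (which is essentially the linear-independence half of Galois descent, carried out by hand), and for the lower bound it explicitly constructs $n$ independent invariant forms as the exterior derivatives $dc_i$ of Chern classes of the orbits of a dual basis, verifying independence by a Jacobian computation. You instead get both bounds simultaneously: Artin's theorem makes $\F(V)/\F(V)^G$ Galois with group $G$, Speiser's theorem ($H^1(G,\Gl_n(\F(V)))$ trivial) gives $\dim_{\F(V)^G} W^G = n$ for $W = \F(V)\otimes_\F V^*$ with its semilinear action, and your denominator-clearing argument correctly identifies $W^G$ with $\Quot(\F[V]^G)\otimes_{\F[V]^G}(\Om^1)^G$ (the injectivity of the localization map into $W$ is harmless since $(\Om^1)^G$ sits inside the torsion-free module $\Om^1$). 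Both arguments are characteristic-free, and you are right that the absence of a Reynolds operator is the reason an averaging argument is unavailable. What the paper's approach buys that yours does not is an explicit generating-independent set of invariant $1$-forms (the $dc_i$), which is in the constructive spirit of the rest of the article; what yours buys is brevity and the stronger, cleaner statement that the invariant rational $1$-forms descend the full space of rational $1$-forms.
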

\begin{proof}
First, we show that the rank is at most $n$. To this end, suppose that
$\om_1,\ldots,\om_m$ are $\F[V]^G$-linearly independent $1$-forms. Since $G$ is
finite, the fraction field of $\F[V]^G$ equals $\F(V)^G$, the field of invariant
rational functions on $V$ (see, for example, \cite{Poly},
Prop.~1.2.4). Consequently, $\om_1,\ldots,\om_m$ are linearly independent over
$\F(V)^G$. We claim they are in fact linearly independent over $\F(V)$, thus
forcing $m\leq n$. Assume the contrary and consider a nontrivial $\F(V)$-linear
relation among the $\om_i$ of minimal length, i.e.,
$\sum\limits_{j=1}^kf_j\om_{i_j}=0$ for a subset $\{i_1,\ldots,i_k\}$ of
$\{1,\ldots,m\}$ of minimal size and with coefficients $f_j\in \F(V)$. Without
loss of generality we may assume that $f_1=1$ and $f_2\in
\F(V)\setminus \F(V)^G$. Let $g \in G$ be an element for which $gf_2\neq
f_2$. Then
$$0=(1-g)(0)=(1-g)\left(\sum\limits_{j=1}^kf_j\om_{i_j}\right) = \sum
\limits_{j=2}^k (f_j-gf_j)\om_{i_j}$$
is a nontrivial relation (since $f_2-gf_2\neq 0$) of length $k-1$,
contradicting the minimality of $k$.

Next, we prove that the rank is at least $n$ by constructing $n$ linearly
independent invariant $1$-forms as follows. Let $z_1,\ldots,z_n$ be a basis of
$V^*$ and consider the union $U$ of the orbits of the $z_i$ under $G$. The Chern classes $c_i$ of
$U=\{u_1,\ldots,u_{r}\}$ are defined as the
coefficients of the polynomial 
$$\prod\limits_{i=1}^{r}(T-u_i) = T^{r} + c_1 T^{r-1} + ... + c_{r} \in \F[V]^G[T].$$

Consider the Jacobian matrix $J$ of the $c_i$ with respect to
$z_1,\ldots,z_n$. By the chain rule, we may write
$$J=\left(\frac{\partial c_i}{\partial u_j}\right)\cdot \left(\frac{\partial
  u_j}{\partial z_k}\right).$$
The first matrix is the Jacobian of the elementary symmetric polynomials $c_i$
  in the elements $u_j$ of $U$, which has nonzero determinant and thus full
  rank. The second matrix has rank $n$ since $\{z_1,\ldots,z_n\}\subseteq
  U$. Consequently, the rank of $J$ is $n$. This implies that we may choose $n$
  Chern classes $c_i$ for which the differential $1$-forms $dc_i$ are linearly independent over
  $\F[V]^G$ (since the wedge product of those $dc_i$ equals the nonzero determinant of
  the corresponding $n\times n$-minor of $J$ multiplied with
  $dz_1\wedge\cdots\wedge dz_n$).

\end{proof}

%%%%%%%%%%%%%%%%%%%%%%%%%%%%%%%%%%%%%%%%%%%%%%%%%%%%%%%%%%%%%%%%%%%%%%%%%%%%%%%%
\section{Reflection groups and pointwise stabilizers}\label{setup}
%%%%%%%%%%%%%%%%%%%%%%%%%%%%%%%%%%%%%%%%%%%%%%%%%%%%%%%%%%%%%%%%%%%%%%%%%%%%%%%%

An element of finite order in $\Gl(V)$ is a {\bf reflection} if
its fixed point space in $V$ is a hyperplane, 
called the {\bf reflecting hyperplane}.
There are two types of reflections: the diagonalizable reflections in $\Gl(V)$ have a single nonidentity eigenvalue which is a root of unity;  the nondiagonalizable reflections 
in $\Gl(V)$ are called {\bf transvections} and have determinant $1$ (note that they can only occur if the characteristic of $\F$ is positive).

Suppose $H\leq V$ is a hyperplane 
defined by a linear form $l_H$ in $V^*$ ($\ker l_H=H$), and let $s$ be a reflection about $H$.
Then there exists a vector $\alpha_s\in V$ for which  
$$ 
 s(v) = v + l_H(v)\, \alpha_s \ \ \quad\text{for all } v \in V,
$$
the {\bf root vector} of $s$ (with respect to $l_H$). 
Note that a transvection is a reflection whose root
vector (called a {\bf transvection root vector})
lies in its reflecting hyperplane, i.e., $l_H(\alpha_s) =0$.

A {\bf reflection group} is a finite group $G$ generated by reflections. 
The subgroup 
$$ 
  G_H:=\{g \in G:\; g{\vert}_{{\rule[0ex]{0ex}{1.5ex}}_H}=\operatorname{id}_H\}
$$ 
is the {\bf  pointwise stabilizer} of $H$ in $G$. The set of transvections in
$G_H$ together with the identity forms a normal subgroup $K_H$, the kernel of the determinant $\det:G_H\rightarrow \F^{\times}$.
Each $G_H$ is generated by $K_H$
together with a diagonalizable reflection $s_H$ of maximal order 
$e_H:=|G_H \colon\hspace{-.5ex}K_H|$. 
If none of the reflections about $H$ are diagonalizable, we set $s_H=1$.
In fact, each $G_H$ is isomorphic to a semi-direct product
$$
 G_H \cong  K_H\rtimes \Z/e_H \Z.
$$

The {\bf transvection root space} of $G_H$ is the 
subspace of $H$ generated by the root vectors of transvections in $G_H$. 
Let $b_H$ be its dimension.  
We remark that
$ b_H = \codim ((V^*)^{K_H})$, which can be seen by putting
the elements of $K_H$ into simultaneous upper triangular form. If $\F=\F_p$ is a prime field, then $|G_H|=e_H\ p^{b_H}$ (the formula becomes more complicated for larger fields, see \cite{hshepler}, Lemma~2.1).

%%%%%%%%%%%%%%%%%%%%%
\begin{lemma}
\label{orbits}
If $H$ and $H'$ are hyperplanes in the same $G$-orbit, then 
$e_H=e_{H'}$ and $b_H=b_{H'}$.
\end{lemma}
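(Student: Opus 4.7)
The plan is to transport everything from $G_H$ to $G_{H'}$ by conjugation. Fix $g\in G$ with $gH=H'$, and consider the conjugation isomorphism $\phi_g\colon G_H\to G_{H'}$, $s\mapsto gsg^{-1}$. One first checks that $\phi_g$ is well-defined and a group isomorphism: if $s$ fixes $H$ pointwise, then $gsg^{-1}$ fixes $gH=H'$ pointwise. Moreover conjugation preserves order, so it takes reflections to reflections of the same order and, by examining the eigenstructure, takes diagonalizable reflections to diagonalizable reflections and transvections to transvections.

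For the equality $e_H=e_{H'}$, observe that $\phi_g$ sends the set of transvections in $G_H$ to the set of transvections in $G_{H'}$, hence $\phi_g(K_H)=K_{H'}$. Therefore $\phi_g$ induces an isomorphism $G_H/K_H\to G_{H'}/K_{H'}$, so $e_H=|G_H:K_H|=|G_{H'}:K_{H'}|=e_{H'}$.

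For the equality $b_H=b_{H'}$, I would compute how root vectors transform under conjugation. If $s\in G_H$ is a reflection with $s(v)=v+l_H(v)\alpha_s$, then for any $v\in V$,
$$
gsg^{-1}(v)=g\bigl(g^{-1}v+l_H(g^{-1}v)\alpha_s\bigr)=v+(l_H\circ g^{-1})(v)\,g(\alpha_s).
$$
The linear form $l_H\circ g^{-1}$ vanishes precisely on $gH=H'$, so it equals $c\,l_{H'}$ for some nonzero scalar $c\in\F$. Thus the root vector of $gsg^{-1}$ with respect to $l_{H'}$ is $c\,g(\alpha_s)$, a scalar multiple of $g(\alpha_s)$. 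In particular, if $s$ is a transvection (so $\alpha_s\in H$), then $g(\alpha_s)\in gH=H'$, confirming once more that $gsg^{-1}$ is a transvection, and the transvection root space of $G_{H'}$ is the image under $g$ of the transvection root space of $G_H$. Since $g\in\Gl(V)$, this image has the same dimension, giving $b_H=b_{H'}$.

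There is no real obstacle here; the only point requiring any care is verifying that conjugation preserves the distinction between diagonalizable reflections and transvections, which follows either from the invariance of eigenvalues under conjugation or directly from the root-vector formula above (since $\alpha_s\in H$ if and only if $g(\alpha_s)\in H'$).
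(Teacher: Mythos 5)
Your proposal is correct and follows the same route as the paper: both rest on the observation that $G_{H'}=gG_Hg^{-1}$ when $H'=gH$, from which the equalities $e_H=e_{H'}$ and $b_H=b_{H'}$ follow. The paper leaves the transport of the transvection root space implicit ("the claim follows"), whereas you verify it explicitly via the root-vector computation $gsg^{-1}(v)=v+c\,l_{H'}(v)\,g(\alpha_s)$; this is a correct and welcome elaboration, not a different method.
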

%%%%%%%%%%%%%%%%%%%%%%%%%%%%5
\begin{proof}
Let $H'=gH$ for some $g \in G$.
If $r$ is a reflection about $H$ and $g\in G$, then $grg^{-1}$ is a reflection
about $gH$. Consequently, $G_{H'}=G_{gH}=gG_Hg^{-1}$, and the claim follows.
\end{proof}

The following easy fact can be found, e.g., in \cite{kane}, 
Section~18.2. 
%%%%%%%%%%%%%%%%%%%%%%%%
\begin{lemma}\label{delta}
If $s$ is a reflection about the hyperplane $H$ and $f$ is a polynomial, 
then $s(f)-f$ is divisible by $l_H$.
\end{lemma}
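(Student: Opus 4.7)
The plan is to reduce to the case where $f$ is a linear form and then bootstrap to arbitrary polynomials using the fact that $s$ acts as a ring automorphism on $\F[V]$.

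First I would handle linear forms directly from the defining formula $s(v) = v + l_H(v)\alpha_s$. Taking the induced action on $V^*$, for $\phi \in V^*$ we get $s(\phi) = \phi + \phi(\alpha_s)\, l_H$ (or $s(\phi) = \phi - \phi(\alpha_s)\, l_H$, depending on the convention used for the action of $G$ on $V^*$). Either way, $s(\phi) - \phi$ is a scalar multiple of $l_H$, so the lemma is immediate for $f \in V^*$, and hence, by $\F$-linearity of $f \mapsto s(f) - f$, for every element of $V^* + \F$.

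Next I would extend to arbitrary polynomials by observing that the set
$$I := \{f \in \F[V] \,:\, l_H \text{ divides } s(f) - f\}$$
is a subring of $\F[V]$. It plainly contains $\F$ and is closed under sums. For closure under products, writing $s(f) = f + l_H f'$ and $s(g) = g + l_H g'$ for some $f', g' \in \F[V]$ (which is possible whenever $f, g \in I$), we compute
$$s(fg) - fg = s(f)s(g) - fg = l_H\bigl(f'g + fg' + l_H f'g'\bigr),$$
so $fg \in I$. By the previous paragraph, $V^* \subseteq I$, and since $V^*$ generates $\F[V]$ as an $\F$-algebra we conclude $I = \F[V]$, which is exactly the claim.

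I do not expect a real obstacle here: the argument is characteristic-free, works uniformly for diagonalizable reflections and transvections (since only the formula $s(v) = v + l_H(v)\alpha_s$ is used), and avoids any subtlety about $\F$ being finite precisely because we never appeal to ``vanishing on $H$ implies divisibility by $l_H$'' — instead the divisibility is extracted algebraically from the explicit form of $s$ on $V^*$.
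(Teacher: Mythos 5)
Your proof is correct. The paper does not prove this lemma itself but cites it to Kane's book (Section 18.2), and your argument is essentially the standard one found there: verify the claim on $V^*$ directly from the formula $s(v)=v+l_H(v)\alpha_s$, then propagate it to all of $\F[V]$ using that $s$ is a ring automorphism (your subring $I$ is just a repackaging of the usual telescoping identity $s(fg)-fg=s(f)(s(g)-g)+(s(f)-f)g$). You are also right to avoid the ``vanishes on $H$ implies divisible by $l_H$'' shortcut, which fails over finite fields; the only cosmetic imprecision is that the scalar in $s(\phi)-\phi=c\,l_H$ is in general $-\det(s)^{-1}\phi(\alpha_s)$ rather than exactly $\pm\phi(\alpha_s)$, but only the fact that it is some scalar is used.
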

%%%%%%%%%%%%%%%%%%%%%%%

The next lemma is rather technical, but it is a key ingredient to the
freeness results in this article, and it highlights the importance of the
numbers $e_H$ and $b_H$. 
If $I=\{i_1,\ldots,i_m\}\subseteq \{1,\ldots,n\}$ is an ordered subset, and $z_1,\ldots,z_n\in V^*$, 
we write $dz_I$ for the product $dz_{i_1}\wedge \cdots\wedge dz_{i_m}$.
%%%%%%%%%%%%%%%%%%%%%55
\begin{lemma}\label{divisions}
Suppose $H$ 
is a hyperplane defined by $l_H \in V^*$ and $G_H$ is a group of reflections about $H$. 
Let $K_H$ denote the kernel of the determinant character on $G_H$, and 
let $s_H$ in $G_H$ be a diagonalizable reflection of order $e_H$.
Let $v_1,\ldots,v_n$ be a basis of $V$ with the following properties: 
\begin{itemize}
\item $v_1,\ldots,v_{n-1}$ form a basis of $H$ 
\item $v_1,\ldots,v_{b_H}$ are $\F$-independent root vectors 
(with respect to $l_H$) of transvections in $K_H$, and
\item  $v_n\notin H$ is an eigenvector for $s_H$ with $l_H(v_n)=1$ .
\end{itemize}
(Such a basis always exists.)
Let $z_1,\ldots,z_n$ be the dual basis of $V^*$. Then
\begin{enumerate}
\item Let $\mu$ be a form invariant under $K_H$, and write $\mu=\sum\limits_{I} {u_I}\, dz_I$. Suppose that $J\cap \{1,\ldots,b_H\}\neq \varnothing$ and $n\notin J$. Then $l_H$ divides $u_J$.
\item Moreover, if $\mu$ is $G_H$-invariant, then $u_J$ is divisible by $l_H^{e_H}$.
\end{enumerate}
\end{lemma}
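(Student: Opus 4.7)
The plan is to work explicitly in the dual basis, where the hypotheses on $v_1,\ldots,v_n$ make the action of $K_H$ and $s_H$ transparent. Since $l_H(v_i)=0$ for $i<n$ and $l_H(v_n)=1$, we have $l_H = z_n$. For each $i \le b_H$, letting $t_i \in K_H$ be the transvection with root vector $v_i$, a direct dual-action computation gives $t_i(z_i)=z_i-z_n$ and $t_i(z_j)=z_j$ for $j\neq i$ (in particular $t_i(z_n)=z_n$), with the analogous identities on the $dz_j$. The reflection $s_H$, when it is nontrivial, scales $z_n$ and $dz_n$ by $\lambda^{-1}$ for $\lambda$ a primitive $e_H$-th root of unity, and fixes all other coordinate forms.

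For part (1), fix $i \in J \cap \{1,\ldots,b_H\}$ and set $J' := (J \setminus \{i\}) \cup \{n\}$. The observation that makes the proof work is that comparing the $dz_J$ coefficient of $\mu$ and $t_i(\mu)$ yields only the uninformative tautology $t_i(u_J)=u_J$, whereas the companion coefficient $dz_{J'}$ is the right one to examine. Expanding $t_i(dz_I)$ shows that $dz_{J'}$ arises only from $I=J'$ (contributing $t_i(u_{J'})$) or from $I=J$, where $t_i(dz_J)=dz_J \pm dz_{J'}$ with a nonzero sign obtained by reordering $dz_n$ into position. Equating the $dz_{J'}$ coefficients on the two sides of $t_i(\mu)=\mu$ produces
$$\pm\, t_i(u_J) \;=\; u_{J'} - t_i(u_{J'}).$$
By Lemma~\ref{delta} the right-hand side is divisible by $l_H=z_n$, and since Lemma~\ref{delta} also gives $t_i(u_J) \equiv u_J \pmod{z_n}$, one concludes $z_n \mid u_J$.

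For part (2), assume $\mu$ is $G_H$-invariant. Since $n \notin J$, we have $s_H(dz_J)=dz_J$, so the $s_H$-invariance of $\mu$ forces $s_H(u_J)=u_J$. Writing $u_J = \sum_k a_k z_n^k$ with $a_k \in \F[z_1,\ldots,z_{n-1}]$, the equality $s_H(u_J) = \sum_k \lambda^{-k} a_k z_n^k = u_J$ forces $a_k = 0$ whenever $e_H \nmid k$. Thus $u_J \in \F[z_1,\ldots,z_{n-1}][z_n^{e_H}]$, and combining this with the divisibility $z_n \mid u_J$ from part (1) kills the $z_n$-free term $a_0$, leaving $z_n^{e_H} \mid u_J$, i.e., $l_H^{e_H} \mid u_J$.

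The main obstacle is identifying the right coefficient to compare in part (1): the naive choice $dz_J$ yields nothing new, and one has to recognize that the transvection couples $u_J$ precisely to $u_{J'}$ via the expansion of $t_i(dz_J)$. Once this coupling is in hand, both applications of Lemma~\ref{delta} are immediate and part (2) is a straightforward eigenspace calculation combined with part (1).
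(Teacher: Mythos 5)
Your proof is correct and follows essentially the same route as the paper: for part (1) you compare the coefficient of $dz_{J'}$ with $J'=(J\setminus\{i\})\cup\{n\}$ under the transvection action (the paper's $dz_{\sigma(I)}$ with $\sigma$ the transposition of $m$ and $n$) and apply Lemma~\ref{delta}, and for part (2) you use the $s_H$-eigenspace decomposition in powers of $z_n$ exactly as the paper does, just spelled out more explicitly. No gaps.
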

%%%%%%%%%%%%%%%%%%%%%%%%%5
\begin{proof}
Let $J$ be as above, and let $m\in J\cap \{1,\ldots,b_H\}$.
Let $t_m\in K_H$ be a transvection with root vector $v_m$. 
Note that $t_m$ sends $dz_i$ to $dz_i$ for $i\neq m$
and $dz_m$ to $dz_m - dz_n$.  
In particular, $dz_I$ is invariant under $t_m$ if $I$ contains both $n$ and $m$.

Let $\sigma$ be the transposition switching $m$ and $n$.
Then
$$
 \begin{aligned}
 \sum_I u_I\ dz_I
 &=\mu = t_m(\mu) \\
 &= \sum_I t_m(u_I) t_m(dz_I)\\
 &=\sum_{I:\, m\notin I} t_m(u_I)dz_I
   +\sum_{I:\,m,\,n\in I} t_m(u_I) dz_I
   +\sum_{I:\, m\in I;\, n\notin I} t_m(u_I)
                                (dz_I\pm dz_{\sigma(I)})\\
 &=\sum_{I} t_m(u_I)dz_I
   +\sum_{I:\, m\in I;\, n\notin I} \pm t_m(u_I)\ 
                                dz_{\sigma(I)}\\
 &=\sum_{I} t_m(u_I)dz_I
   +\sum_{I:\, m\notin I;\,n\in I} \pm t_m(u_{\sigma(I)})
                                dz_{I}\\
 &=\sum_{I:\, m\in I\text{ or }n\notin I} t_m(u_I)dz_I
   +\sum_{I:\, m\notin I; n\in I} 
                      t_m(u_I) \pm t_m(u_{\sigma(I)}) 
                                dz_I.
 \end{aligned}
$$
Equating polynomial coefficients, we find that if $m\notin I$ and $n \in I$, then 
$$
  u_I=t_m(u_I) \pm t_m(u_{\sigma(I)})
  =t_m(u_I) \pm u_{\sigma(I)} \quad\text{and thus}\quad
  t_m(u_I)-u_I = \pm u_{\sigma(I)}
$$
(whereas $u_I$ is invariant in all other cases).
By Lemma~\ref{delta}, this implies that $u_J$ is divisible by $l_H$ 
(as $J= \sigma(I)$ for some such $I$). 

For the second statement, we may assume $G_H\neq K_H$, i.e.,
$s_H\neq 1$ (otherwise, we are in the situation
of (1)).  Note that $s:=s_H$ sends $dz_i$ to $dz_i$ for $i\neq n$
and $dz_n$ to $\lambda^{-1}dz_n$ where $\lambda:=\det(s)$.
Since $\mu$ is invariant under $s$,
$$
 \begin{aligned}
 \sum_I u_I\ dz_I
 &= \sum_{I:\ n\notin I} s(u_I)\ dz_I
    + \sum_{I:\ n\in I} \lambda^{-1} s(u_I)\ dz_I. \\
\end{aligned}
$$
Comparing coefficients shows that $u_J$ is invariant. But since $l_H$ divides $u_J$ and $\lambda$ has order $e_H$, $u_J$ must in fact be divisible by $l_H^{e_H}$.
\end{proof}
%%%%%%%%%%%%%%%%%%%%%%%%%%%%%%%%%%%%%%%%%%%%%%%%%%%%%%%%%%%%%%%%%%%%%%%%%%%%%%%%
%%%%%%%%%%%%%%%%%%%%%%%%%%%%%%%%%%%%%%%%%%%%%%%%%%%%%%%%%%%%%%%%%%%%%%%%%%%%%%%%

%%%%%%%%%%%%%%%%%%%%%%%%%%%%%%%%%%%%%%%%%%%%%%%%%%%%%%%%%%%%%%%%%%%%%%%%%%%%%%%%
\section{A Criterion for Freeness of Invariant 1-Forms}\label{free}
%%%%%%%%%%%%%%%%%%%%%%%%%%%%%%%%%%%%%%%%%%%%%%%%%%%%%%%%%%%%%%%%%%%%%%%%%%%%%%%%

Let $G$ be a finite group generated by reflections. Consider the collection
$\A=\A(G)$ of reflecting hyperplanes for $G$, the 
{\bf reflection arrangement} of $G$. 
For a linear character $\chi:G\rightarrow \F^{\times}$ of the group (acting on $V$)
and a $G$-module $M$, let $M^G:=\{m\in M: gm=m\}$ and $M^G_\chi:=\{m\in M:
gm=\chi(g) m\}$, the module of invariants and $\chi$-invariants (relative
invariants with respect to $\chi$),
respectively.
Let $\det:G\rightarrow \F^{\times}$
be the determinant character of $G$ (acting on $V$).
Define
$$
  Q_{\det}:=\prod_{H\in\A} l_H^{e_H-1},
$$
where $e_H$ is as defined in the last section. 

The following proposition is due to Stanley \cite{stanley} for $K=\C$. Nakajima
\cite{nakajima} proves a more general statement for arbitrary fields. A proof in
the flavor of Stanley's original argument can be found in Smith
\cite{larryspreprint}.
\begin{prop}\label{stanley}
Let $G$ be a reflection group. Then 
$$\F[V]^G_{\det} = \F[V]^G Q_{\det}.$$
\end{prop}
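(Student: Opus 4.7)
The plan is to prove both inclusions separately. The easy direction $\F[V]^G\,Q_{\det}\subseteq \F[V]^G_{\det}$ reduces to showing that $Q_{\det}$ itself is a $\det$-semi-invariant, while the reverse inclusion amounts to showing that every $\det$-semi-invariant is divisible by $Q_{\det}$ with $G$-invariant quotient.

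For the first inclusion, I observe that since $G$ permutes $\A$ and $e_H$ is constant on $G$-orbits (Lemma~\ref{orbits}), for each $g\in G$ the polynomial $g(Q_{\det})$ equals a scalar multiple $\chi(g)\,Q_{\det}$, where $\chi\colon G\to \F^{\times}$ is a linear character. To identify $\chi$ with $\det$ it suffices to check on the reflections generating $G$. A direct computation shows $s(l_H)=\det(s)^{-1}l_H$ for every reflection $s$ about $H$ (whether diagonalizable or a transvection, the latter because $\det(s)=1$ and $l_H(\alpha_s)=0$), hence $s(l_H^{e_H-1})=\det(s)\,l_H^{e_H-1}$ after using $\det(s)^{e_H}=1$. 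For the remaining factor $p:=\prod_{H'\neq H} l_{H'}^{e_{H'}-1}$, the key observation is that $s$ fixes $H$ pointwise and therefore preserves the ideal $(l_H)$ and acts trivially on the quotient $\F[V]/(l_H)\cong \F[H]$; since $p|_H$ is nonzero as a polynomial on $H$ (being a product of nonzero linear forms on $H$, in a UFD), the equality $s(p)|_H=p|_H$ combined with $s(p)=\mu p$ forces $\mu=1$. Multiplying the two factors gives $s(Q_{\det})=\det(s)\,Q_{\det}$.

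For the second inclusion, let $f\in \F[V]^G_{\det}$ and fix $H\in \A$. If $e_H=1$ the divisibility $l_H^{e_H-1}\mid f$ is trivial. Otherwise I choose a basis as in Lemma~\ref{divisions}, so that $l_H$ coincides with the dual coordinate $z_n$, $s_H$ fixes $z_1,\ldots,z_{n-1}$, and sends $z_n$ to $\lambda^{-1}z_n$ with $\lambda=\det(s_H)$ a primitive $e_H$-th root of unity. Expanding $f=\sum_i f_i\,z_n^i$ with $f_i\in \F[z_1,\ldots,z_{n-1}]$, the equation $s_H(f)=\lambda f$ forces $\lambda^{-i}=\lambda$ whenever $f_i\neq 0$, i.e., $i\equiv e_H-1\pmod{e_H}$. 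Hence $l_H^{e_H-1}\mid f$. Pairwise coprimality of the linear forms $l_H$ in the UFD $\F[V]$ then yields $Q_{\det}\mid f$, and comparing semi-invariance of $f$ and $Q_{\det}$ under the same character $\det$ forces the quotient $h=f/Q_{\det}$ to lie in $\F[V]^G$.

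The main technical point I anticipate is the identification $\chi=\det$ in the first step, specifically the verification that the scalar $\mu$ by which $s$ acts on $p$ is $1$. A naive attempt via evaluation at a point of $H\setminus\bigcup_{H'\neq H}H'$ can fail over small finite fields, where such points need not exist; passing to the polynomial restriction $p|_H\in \F[H]$ and using that $s$ is the identity on $\F[H]$ bypasses this difficulty. Once the semi-invariance of $Q_{\det}$ is secured, the divisibility argument in the second step is a direct degree-in-$z_n$ calculation, essentially the $0$-form specialization of the mechanism in Lemma~\ref{divisions}(2), and no further use of transvections is required since they contribute nothing to the exponent $e_H-1$.
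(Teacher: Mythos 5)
Your proof is correct, and it is essentially the standard Stanley-style argument that the paper itself defers to its references (Stanley, Nakajima, Smith) rather than reproving: show $Q_{\det}$ is a $\det$-relative invariant by checking the generating reflections, then extract the forced power of $l_H$ from the eigenvalue decomposition of a $\det$-invariant under the maximal diagonalizable reflection $s_H$. Your care in pinning down the scalar $\mu$ on the complementary factor via restriction to $\F[H]$ (rather than evaluating at a generic point, which can fail over small fields) is exactly the right adaptation to arbitrary characteristic, and the observation that transvections contribute nothing because $\det=1$ and $l_H(\alpha_s)=0$ is also correct.
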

\noindent A similar statement is true for arbitrary linear characters of $G$, see 
Proposition~\ref{chistanley}.

If the characteristic of $\F$ is zero (or more generally, if the group $G$ does
not contain transvections), the group and its reflection arrangement can be
recovered from $Q_{\det}$ alone (as this polynomial encodes the orders
of the reflections about the hyperplanes as well as the hyperplanes 
themselves). Moreover, it detects generators for invariant
differential forms: The analogue of Saito's criterion for 
invariant 1-forms (this is a special case of a theorem of Orlik and Solomon, 
\cite{orliksolomon}, Theorem~3.1) asserts that invariant 1-forms $\om_1,\ldots,\om_n$ 
generate $(\Om^1)^G$ if $\om_1\wedge\cdots\wedge\om_n\doteq 
Q_{\det} dz_1\wedge\cdots\wedge dz_n$ 
for one and hence for any basis $z_i$ of $V^*$ (we write $a\doteq b$ to 
indicate that $a=cb$ for some $c\in \F^\times$).

However, in the general setting, $G$ contains transvections and 
$Q_{\det}$ does not carry enough information; we need another polynomial 
to encode characteristics of the transvection root space.
Let $\tilde{\A}$ be the multi-arrangement of hyperplanes formed
by assigning multiplicity $e_H\, b_H$ to each $H$ in $\A$.
Then $\tilde{\A}$ is defined by the polynomial
$$
  Q(\tilde{\A}) := \prod_{H\in \A} l_H^{\,e_H b_H}.
$$
Note that $Q(\tilde{\A})=1$ when all the reflections in $G$ are diagonalizable.

Fix a basis $z_1,\ldots,z_n$ for $V^*$ and
let $\vol$ be the volume form $dz_1\wedge\cdots\wedge dz_n$. 
Consider invariant 1-forms $\om_1,\ldots,\om_n$. Then
$\om_1\wedge\cdots\wedge\om_n=f\,\vol$ for
some polynomial $f \in \F[V]$. 
Since $\vol$ is $\det^{-1}$-invariant, $f$ must be 
$\det$-invariant. 
In particular, $Q_{\det}$ must divide $f$ by Proposition~\ref{stanley}. 
The analogue of Saito's criterion fails for groups which contain transvections
because $f$ is actually divisible by a polynomial of higher degree, 
$Q(\tilde{\A})Q_{\det}$.

%%%%%%%%%%%%%%%%%%%%%%%%%%%%%%%%%%%%%%%%%%%%%%%%%%%%%%%%%%%%%%%%%%
\begin{lemma}\label{divides}
Suppose $G\leq \Gl_n(\F)$ is a reflection group.
If $\om_1,\ldots, \om_n$ are invariant $1$-forms,
then $Q(\tilde{\A})\, Q_{\det}$
divides $\om_1 \wedge\cdots\wedge \om_n$.
\end{lemma}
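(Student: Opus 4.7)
The plan is to establish divisibility one hyperplane at a time. Since distinct reflecting hyperplanes give non-proportional linear forms, the $l_H$ are pairwise coprime irreducibles in the UFD $\F[V]$, so it suffices to prove that $l_H^{e_H(b_H+1)-1}$ divides the polynomial $f\in\F[V]$ determined by $\om_1\wedge\cdots\wedge\om_n=f\,\vol$ for each fixed $H\in\A$; the product of these contributions over all $H$ is precisely $Q(\tilde\A)\,Q_{\det}$. Fix $H$ and choose a basis $v_1,\ldots,v_n$ of $V$ satisfying the three conditions of Lemma~\ref{divisions}, with dual basis $z_1,\ldots,z_n$ of $V^*$. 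Since $v_1,\ldots,v_{n-1}$ span $H$ and $l_H(v_n)=1$, the linear forms $l_H$ and $z_n$ agree on this basis, so in fact $l_H=z_n$. Write each $\om_i=\sum_j u_{ij}\,dz_j$, so that $f=\det(u_{ij})$.

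I next analyse the divisibility of the entries $u_{ij}$ by $z_n$ column by column. For each index $j\in\{1,\ldots,b_H\}$, I apply Lemma~\ref{divisions}(2) to the $G_H$-invariant $1$-form $\om_i$ with singleton $J=\{j\}$: the hypotheses $J\cap\{1,\ldots,b_H\}\neq\varnothing$ and $n\notin J$ hold, so $z_n^{e_H}$ divides $u_{ij}$. For the column $j=n$, I use the semi-invariance of $u_{in}$ under $s:=s_H$. Since $s$ fixes $z_1,\ldots,z_{n-1}$ and scales $z_n$ by $\lambda^{-1}$, where $\lambda=\det(s)$, comparing the equation $\om_i=s(\om_i)$ coefficient by coefficient forces $s(u_{in})=\lambda\,u_{in}$. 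Expanding $u_{in}=\sum_k c_k(z_1,\ldots,z_{n-1})\,z_n^k$ and equating coefficients yields $(\lambda^{k+1}-1)c_k=0$, so only monomials with $k\equiv -1\pmod{e_H}$ can appear, whence $z_n^{e_H-1}$ divides $u_{in}$ (trivially so when $e_H=1$). No divisibility is needed on the intermediate columns $b_H<j<n$.

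To conclude, expand $f=\sum_\sigma\operatorname{sgn}(\sigma)\prod_i u_{i,\sigma(i)}$. Every summand picks one factor from each column, so it contains $b_H$ factors drawn from columns in $\{1,\ldots,b_H\}$ (each divisible by $z_n^{e_H}$) together with one factor from column $n$ (divisible by $z_n^{e_H-1}$). Each summand is therefore divisible by $z_n^{e_Hb_H+e_H-1}=l_H^{e_H(b_H+1)-1}$, and hence so is $f$; multiplying these divisibilities across the hyperplane orbits yields $Q(\tilde\A)\,Q_{\det}\mid f$.

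The main obstacle, and the key structural observation, is that Lemma~\ref{divisions} supplies divisibility only on the transvection columns and explicitly forbids $n\in J$, so an independent argument is required for the column indexed by $n$. The semi-invariance of $u_{in}$ under $s_H$---which arises because $dz_n$ scales by $\lambda^{-1}$ while $\om_i$ itself is invariant---provides exactly the missing $l_H^{e_H-1}$ factor that matches the gap between $Q(\tilde\A)$ and $Q(\tilde\A)\,Q_{\det}$. A softer attempt using only the determinant-semi-invariance of $f$ (via Proposition~\ref{stanley}) together with the column divisibility by $l_H^{e_Hb_H}$ would yield merely $\max(e_Hb_H,\,e_H-1)$ powers of $l_H$ rather than the required sum $e_Hb_H+e_H-1$, so the column-wise analysis cannot be avoided.
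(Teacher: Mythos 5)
Your proof is correct, and it takes a genuinely different route from the paper's for the $Q_{\det}$ factor. The treatment of the columns $1,\ldots,b_H$ via Lemma~\ref{divisions}(2) is identical to the paper's, and yields $Q(\tilde{\A})\mid\om_1\wedge\cdots\wedge\om_n$ in both arguments. For the remaining factor $Q_{\det}=\prod_H l_H^{e_H-1}$, the paper observes that $Q(\tilde{\A})$ is $G$-invariant, so the quotient $\om_1\wedge\cdots\wedge\om_n/Q(\tilde{\A})$ is an invariant $n$-form whose coefficient is $\det$-invariant, and then invokes Proposition~\ref{stanley}. You instead extract the extra $l_H^{e_H-1}$ locally, from the semi-invariance $s_H(u_{in})=\lambda u_{in}$ of the last-column entries, and combine the column divisibilities through the Leibniz expansion of the determinant to get the full exponent $e_Hb_H+e_H-1$ on each $l_H$ directly. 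Your version is self-contained modulo Lemma~\ref{divisions} (you are essentially inlining the divisibility half of Stanley's theorem via the standard eigenvalue argument), at the cost of some extra bookkeeping; the paper's is shorter because it reuses Proposition~\ref{stanley}, which is needed elsewhere anyway. One small correction to your closing remark: the column-$n$ analysis \emph{can} be avoided --- the trap you describe (getting only $\max(e_Hb_H,e_H-1)$) afflicts the naive combination of the two divisibilities of $f$ itself, but the paper sidesteps it by first dividing out the invariant polynomial $Q(\tilde{\A})$ and only then applying Proposition~\ref{stanley} to the quotient, which does produce the sum of the exponents.
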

\begin{proof}
Fix a hyperplane $H \in \mathcal{A}$. We choose a basis of $V$ and $V^*$ as in
Lemma~\ref{divisions}. If $\mu$ is any invariant $1$-form and we write
$\mu=\sum_i u_idz_i$, then by the same lemma, the first $b_H$
coefficients $u_i$ are divisible by $l_H^{e_H}$. Consequently, the wedge product
of any $n$ invariant $1$-forms is divisible by $l_H^{e_Hb_H}$. Since the linear
forms defining different hyperplanes are relatively prime, $Q(\tilde{A})$
divides $\om_1\wedge\cdots\wedge\om_n$.

Because $Q(\tilde{A})$ is invariant, the quotient
$\om_1\wedge\cdots\wedge\om_n/Q(\tilde{A})$ is invariant and can be written
as $f \vol$ for some $\det$-invariant polynomial $f$. By Proposition~\ref{stanley}, $f$ is
divisible by $Q_{\det}$, proving the claim.
\end{proof}
%%%%%%%%%%%%%%%%%%%%%%%%%%%%%%%%%%%%%%%%%%%%%%%%%%%%%%%%%%%%%%%%

The above lemma indicates the alteration needed for the criterion to
hold in all characteristics.

%%%%%%%%%%%%%%%%%%%%%%%%%%%%%%%%%%%%%%%%%%%%%%%%%%%
\begin{thm}
\label{solomon}
Suppose $G\leq \Gl_n(\F)$ is a reflection group.
Suppose $\om_1,\ldots, \om_n$ are invariant $1$-forms with 
$$
\om_1 \wedge\cdots\wedge \om_n \doteq Q(\tilde{\A})\, Q_{\det}\ \vol.
$$
Then $\om_1, \ldots, \om_n$ is a basis for
the set of invariant $1$-forms as a free module 
over the ring of invariants, $\F[V]^G$:
$$
(\Om^1)^G = \bigoplus_i \F[V]^G \ \om_i.
$$
\end{thm}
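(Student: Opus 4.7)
The plan is to run the standard Saito-criterion argument: establish linear independence of $\om_1,\ldots,\om_n$ over $\F(V)$, then show every invariant $1$-form lies in their $\F[V]^G$-span by a Cramer-style wedging computation whose integrality is supplied by Lemma~\ref{divides}.

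For independence, I would observe that the hypothesis $\om_1\wedge\cdots\wedge\om_n\doteq Q(\tilde{\A})\,Q_{\det}\,\vol$ is a \emph{nonzero} top-degree form, so $\om_1,\ldots,\om_n$ are linearly independent over $\F(V)$ and a fortiori over $\F[V]^G$. In particular $\{\om_1,\ldots,\om_n\}$ is an $\F(V)$-basis of $\F(V)\otimes_{\F[V]}\Om^1$, and every invariant $1$-form $\eta$ admits a unique expansion $\eta=\sum_{i=1}^n f_i\,\om_i$ with $f_i\in\F(V)$. Applying an arbitrary $g\in G$ and using the invariance of $\eta$ and of the $\om_i$ produces $\eta=\sum_i g(f_i)\,\om_i$; uniqueness then forces $g(f_i)=f_i$, so each $f_i$ lies in $\F(V)^G$, which is the fraction field of $\F[V]^G$ (the fact already invoked in the proof of Theorem~\ref{rank}).

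To upgrade this to $f_i\in\F[V]$, I would wedge $\eta$ with $\om_1,\ldots,\widehat{\om_i},\ldots,\om_n$ to obtain
\[
\eta\wedge\om_1\wedge\cdots\wedge\widehat{\om_i}\wedge\cdots\wedge\om_n
 = \pm f_i\,\om_1\wedge\cdots\wedge\om_n
 \doteq f_i\,Q(\tilde{\A})\,Q_{\det}\,\vol.
\]
The left-hand side is itself a wedge of $n$ invariant $1$-forms, so Lemma~\ref{divides} says it equals $h_i\,Q(\tilde{\A})\,Q_{\det}\,\vol$ for some $h_i\in\F[V]$; equating coefficients of $\vol$ forces $f_i\in\F[V]$, and hence $f_i\in\F[V]\cap\F(V)^G=\F[V]^G$. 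The only real content of the argument is Lemma~\ref{divides}: without the transvection factor $Q(\tilde{\A})$, the wedging step would only give expansions with denominators, mirroring the classical nonmodular Saito criterion (where $Q(\tilde{\A})=1$). Once that lemma is in hand, the rest is a clean unwinding of the $\F(V)$-basis expansion and I do not anticipate any further obstacle.
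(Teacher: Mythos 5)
Your proof is correct and follows essentially the same route as the paper: linear independence from the nonzero wedge, expansion of an arbitrary invariant $1$-form with coefficients in $\F(V)^G$, and a Cramer-style wedging argument whose integrality comes from Lemma~\ref{divides}. The only (harmless) cosmetic difference is that you obtain $f_i\in\F(V)^G$ by expanding over $\F(V)$ and applying Galois descent, whereas the paper gets it directly by noting the $\om_i$ span $\F(V)^G\otimes_{\F[V]^G}(\Om^1)^G$, using Theorem~\ref{rank}.
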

\begin{proof}
Since $\om_1\wedge\cdots\wedge\om_n$ is nonzero, the forms $\om_1, \ldots, \om_n$
are linearly independent over the field of fractions $\F(V)^G$ of $\F[V]^G$, and thus span $\F(V)^G\otimes_{\F[V]^G}(\Om^1)^G$ 
as a vector space over $\F(V)^G$.
Let $\om$ be an invariant $1$-form, and write $\om = \sum_i h_i \om_i$
with coefficients $h_i \in \F(V)^G$.  Fix some $i$ for which $h_i\neq 0$ and consider
$\om\wedge\om_{1}\wedge\cdots\wedge\om_{i-1}\wedge\om_{i+1}\wedge\cdots\wedge \om_n$.
Up to a nonzero scalar, this equals 
$$h_i\, \om_1\wedge\cdots\wedge\om_n=h_i\, Q(\tilde{\A})\, Q_{\det}\, \vol.$$ 
By Lemma~\ref{divides}, 
the product $\om\wedge\om_{1}\wedge\cdots\wedge\om_{i-1}\wedge\om_{i+1}\wedge\cdots\wedge \om_n$
is divisible by $Q(\tilde{\A})\, Q_{\det}$
and $h_i \in \F[V]\cap \F(V)^G=\F[V]^G$. 
\end{proof}

%%%%%%%%%%%%%%%%%%%%%%%%%%%%%%%%%%%%%%%%%%%%%%%%%%%%%%%%%%%
%%%%%%%%%%%%%%%%%%%%%%%%%%%%%%%%%%%%%%%%%%%%%%%%%%%%%%%%%%
\section{An algebra structure on invariant differential forms}

Let $G$ be a finite reflection group. In this section, we explain how one can 
endow $\Om^G$ with the structure of a 
free exterior algebra when $(\Om^1)^G$ is a free $\F[V]^G$-module
and the transvection root space of each pointwise stabilizer is maximal.
(Here, each transvection root space has dimension $b_H = n-1$ and coincides with the hyperplane~$H$.)
We use a twisted wedge product to expose the free exterior algebra structure.
 
The lemma below holds for arbitrary finite subgroups of the general linear group, not just reflection groups (with $\A$ defined as the arrangement associated to the subgroup generated by reflections).
%%%%%%%%%%%%%%%%%%%%%%%%%%%%%%%%
\begin{lemma}\label{wedge}
Let $G\leq \Gl_n(\F)$ be a finite group, and
suppose $\mu, \nu$ are $G$-invariant forms.  Then
$$
  \delta(\A_{n-1}) := \prod_{\substack{ H\in\A\\ b_H=n-1}} l_H^{e_H}
$$
divides $\mu\wedge\nu$.
\end{lemma}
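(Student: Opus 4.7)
The plan is to work hyperplane by hyperplane and exploit Lemma~\ref{divisions}. Since the linear forms $l_H$ defining distinct reflecting hyperplanes are pairwise coprime, it suffices to fix a single hyperplane $H\in\A$ with $b_H=n-1$ and to prove that $l_H^{e_H}$ divides $\mu\wedge\nu$.

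For this $H$ I would invoke Lemma~\ref{divisions} and choose a basis $v_1,\ldots,v_n$ of $V$ of the preferred form. Because $b_H=n-1=\dim H$, the first $n-1$ vectors automatically form a basis of $H$ consisting entirely of transvection root vectors, and $v_n$ is either an $s_H$-eigenvector with $l_H(v_n)=1$ or we are in the degenerate case $s_H=1$. Passing to the dual basis $z_1,\ldots,z_n\in V^*$, I would expand
$$\mu=\sum_I u_I\,dz_I,\qquad \nu=\sum_J w_J\,dz_J.$$
The decisive observation is that, because $\{1,\ldots,b_H\}=\{1,\ldots,n-1\}$, any \emph{nonempty} index set $I\subseteq\{1,\ldots,n\}$ with $n\notin I$ satisfies $I\cap\{1,\ldots,b_H\}\neq\varnothing$ automatically. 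Lemma~\ref{divisions}(2) then forces $l_H^{e_H}\mid u_I$ for every such $I$, and likewise $l_H^{e_H}\mid w_J$ for every nonempty $J$ with $n\notin J$.

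The remaining step is a short combinatorial inspection of the wedge. In the expansion $\mu\wedge\nu=\sum_{I,J}u_Iw_J\,dz_I\wedge dz_J$, a surviving term requires $I\cap J=\varnothing$, so at most one of $I,J$ contains the index $n$. Assuming $\mu$ and $\nu$ have positive degree so that both index sets are nonempty in any surviving term, whichever of $I,J$ omits $n$ is a nonempty subset of $\{1,\ldots,n-1\}$, and its associated coefficient is divisible by $l_H^{e_H}$ by the preceding paragraph. Hence every coefficient of $\mu\wedge\nu$ is divisible by $l_H^{e_H}$, which is what was needed.

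The real content of the argument is just the recognition that the extremal condition $b_H=n-1$ is precisely what makes Lemma~\ref{divisions}(2) apply to every nonempty index set not containing $n$; after that the wedge calculation is routine. The only subtle point I anticipate is the tacit restriction to forms of positive degree (the claim clearly fails for arbitrary invariant polynomial $0$-forms), but this is the regime in which the lemma will be applied to build the twisted wedge product on $\Om^G$.
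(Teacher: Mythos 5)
Your proof is correct and follows the paper's argument exactly: reduce to a single hyperplane with $b_H=n-1$ via coprimality of the $l_H$, choose the basis of Lemma~\ref{divisions}, note that maximality of the transvection root space makes part (2) of that lemma apply to every nonempty index set omitting $n$, and observe that each surviving term of the wedge has at least one such index set. Your caveat about degree-$0$ components is a fair point that the paper leaves implicit (the lemma is only used for products of $1$-forms).
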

%%%%%%%%%%%%%%%%%%%%%%%
\begin{proof}
Let $H$ be a reflecting hyperplane for which the transvection root space
of $G_H$ is maximal.
Fix a basis $z_1,\ldots,z_n$ as in the hypothesis of Lemma~\ref{divisions}.
Then by the same lemma, if $\varnothing \neq I\subseteq \{1,\ldots,n-1=b_H\}$ is an index set, $l_H^{e_H}=z_n^{e_H}$ must divide the coefficient 
$u_I$ in $\mu=\sum u_Idz_I$. A similar statement is true for $\nu$, and so $\mu\wedge\nu$ is divisible by $l_H^{e_H}$. 
The claim now follows from the fact that the linear forms defining different hyperplanes are relatively prime.
\end{proof}
%%%%%%%%%%%%%%%%%%%%%%
\begin{remark}
The polynomial $\delta(\A_{n-1})$ may be interpreted as the {\bf discriminant polynomial} for the arrangement $\A_{n-1}:=\{H\in\A: b_H=n-1\}$
of hyperplanes with maximal transvection root spaces. In the theory of complex reflection groups, the discriminant polynomial is a product of linear forms defining the reflecting hyperplanes with each linear form raised to the power $e_H = |G_H|$, the maximal order of a (diagonalizable) reflection about the corresponding hyperplane.  It is an invariant polynomial of minimal degree which vanishes on the reflection arrangement.
\end{remark}
%%%%%%%%%%%%%%%%%%%%%%%%%%%%%%

%%%%%%%%%%%%%%%%%%%%%%%%%%
\begin{thm}\label{freealgebra}
Let $G\leq \Gl_n(\F)$ be a finite group which has polynomial invariants and suppose that 
the transvection root space of the pointwise stabilizer of any 
reflecting hyperplane is maximal.
If $\om_1, \ldots, \om_n$ are invariant $1$-forms with
$$\om_1\wedge\cdots\wedge\om_n \doteq Q(\tilde{\A})\, Q_{\det}\ \vol,$$
then they generate $\Om^G$
as a free exterior $\F[V]^G$-algebra
under the twisted wedge product
$$
 (\mu,\nu)\ \  \mapsto \ \ \frac{\mu\wedge\nu}{\delta(\A_{n-1})}\ .
$$
\end{thm}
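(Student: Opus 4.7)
The plan is to identify $\Om^G$, equipped with the twisted wedge $\tilde\wedge$, as the free exterior $R$-algebra generated by $\om_1,\ldots,\om_n$, where $R:=\F[V]^G$. Set $\Delta:=\delta(\A_{n-1})$. Under the maximality hypothesis every $b_H=n-1$, so $Q(\tilde\A)=\Delta^{n-1}$ and the given normalization reads
$$\om_1\wedge\cdots\wedge\om_n\;\doteq\;\Delta^{n-1}\,Q_{\det}\,\vol.$$
By Lemma~\ref{wedge}, the rule $\mu\,\tilde\wedge\,\nu:=(\mu\wedge\nu)/\Delta$ defines a binary operation $\Om^G\times\Om^G\to\Om^G$; associativity, graded commutativity, and $\om\,\tilde\wedge\,\om=0$ for $1$-forms $\om$ are inherited from ordinary wedging, since the denominator is a fixed polynomial. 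By Theorem~\ref{solomon}, the $\om_i$ form a free $R$-basis of $(\Om^1)^G$. For an ordered subset $I=\{i_1<\cdots<i_k\}\subseteq\{1,\ldots,n\}$, set
$$\om_I\;:=\;\om_{i_1}\,\tilde\wedge\,\cdots\,\tilde\wedge\,\om_{i_k}\;=\;\frac{\om_{i_1}\wedge\cdots\wedge\om_{i_k}}{\Delta^{k-1}};$$
an easy induction (one application of Lemma~\ref{wedge} per step) shows $\om_I\in(\Om^k)^G$. The theorem will follow once we show that $\{\om_I:I\subseteq\{1,\ldots,n\}\}$ is a free $R$-basis of $\Om^G$.

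Linear independence over $R$ is cheap: the top wedge is nonzero by hypothesis, so $\om_1,\ldots,\om_n$ are $\F(V)$-linearly independent, hence so are the untwisted products $\om_{i_1}\wedge\cdots\wedge\om_{i_k}$ in $\Om^k\otimes_{\F[V]}\F(V)$, and rescaling by $1/\Delta^{k-1}$ preserves this. For spanning, fix $\mu\in(\Om^k)^G$. Expanding $\mu$ in the $\F(V)$-basis $\{\om_I\}$ and using $G$-invariance of both sides gives a unique expression $\mu=\sum_I f_I\,\om_I$ with $f_I\in\F(V)^G$; the task is to upgrade each $f_I$ to $R$. Fix $I$, let $I^c=\{j_1<\cdots<j_{n-k}\}$, and set $\om_{I^c}^{\mathrm{un}}:=\om_{j_1}\wedge\cdots\wedge\om_{j_{n-k}}$. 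The ordinary wedge with $\om_{I^c}^{\mathrm{un}}$ annihilates every term with $I'\neq I$ (shared factor), leaving
$$\mu\wedge\om_{I^c}^{\mathrm{un}}\;\doteq\;f_I\,\Delta^{n-k}\,Q_{\det}\,\vol.$$
On the other hand, $\om_{I^c}^{\mathrm{un}}=\Delta^{n-k-1}\,\om_{I^c}$, and Lemma~\ref{wedge} makes $\mu\wedge\om_{I^c}$ divisible by $\Delta$, so $\mu\wedge\om_{I^c}^{\mathrm{un}}=\Delta^{n-k}\,(\mu\,\tilde\wedge\,\om_{I^c})$. But $\mu\,\tilde\wedge\,\om_{I^c}$ is an invariant top form, whose coefficient against $\vol$ is $\det$-invariant and therefore equal to $s_I\,Q_{\det}$ for some $s_I\in R$ by Proposition~\ref{stanley}. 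Equating the two expressions and canceling the nonzero factor $\Delta^{n-k}\,Q_{\det}\,\vol$ forces $f_I\in R$.

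The main technical point is making the powers of $\Delta$ agree on the two sides of this comparison; the balance $n-1-(k-1)=n-k$ works out precisely because $Q(\tilde\A)=\Delta^{n-1}$, which is exactly where the maximality hypothesis on transvection root spaces enters. Combining the spanning step with linear independence gives $\Om^G=\bigoplus_{I\subseteq\{1,\ldots,n\}} R\,\om_I$. Because $\tilde\wedge$ is associative, graded anticommutative, and squares $1$-forms to zero, this decomposition together with the multiplicative structure realises $\Om^G$ as the free exterior $R$-algebra generated by $\om_1,\ldots,\om_n$.
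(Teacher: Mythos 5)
Your proposal is correct and follows essentially the same route as the paper: both reduce to showing that the twisted products $\om_I$ form a free $\F[V]^G$-basis in each degree, establish linear independence from the nonvanishing of the top wedge, and prove spanning by wedging an arbitrary invariant form against the complementary product, using Lemma~\ref{wedge} for divisibility by $\delta(\A_{n-1})$ and Proposition~\ref{stanley} to land the coefficient in $\F[V]^G$, with the identity $Q(\tilde{\A})=\delta(\A_{n-1})^{n-1}$ (from $b_H=n-1$) balancing the powers. The only differences are bookkeeping (you wedge with the untwisted complementary product and divide by $\Delta^{n-k}$ where the paper wedges with the twisted $\om_J$ and divides once more by $\delta$) and your explicit verification of the exterior-algebra axioms, which the paper leaves implicit.
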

%%%%%%%%%%%%%%%%%%%%%%%%%%%
\begin{proof}
Fix $k\in \{1,\ldots,n\}$. For each ordered index set $I=\{i_1,\ldots,i_k\}$ of length $k$ consider the k-form $$\om_I:=\frac{\om_{i_1}\wedge\cdots\wedge \om_{i_k}}{\delta(\A_{n-1})^{k-1}}\ ,$$ which is invariant by Lemma~\ref{wedge}. 
To prove the theorem, it suffices to show that these forms constitute a basis for $(\Om^k)^G$ as an $\F[V]^G$-module.
Since $\om_1\wedge\cdots\wedge\om_n$ is nonzero, the $\om_I$ are linearly 
independent over the field of fractions $\F(V)^G$ and thus form a basis  
of $\F(V)^G\otimes_{\F[V]^G}(\Om^k)^G$.

Let $\om$ be an invariant $k$-form and write $\om = \sum_I h_I \om_I$
with coefficients $h_I \in \F(V)^G$. We will show that these coefficients lie in $\F[V]^G$. To this end, fix some $I$ for which $h_I\neq 0$ and consider the complementary (ordered) index set $J=\{1,\ldots,n\}\setminus I$. Then 
\begin{equation*}
\begin{split}
\frac{\om\wedge\om_J}{\delta(\A_{n-1})}&= \frac{h_I\,\om_I\wedge\om_J}{\delta( \A_{n-1})}\\
&\doteq\frac{h_I\, \om_1\wedge\cdots\wedge\om_n}{\delta(\A_{n-1})^{1+|I|-1+|J|-1}}\\
&\doteq\frac{h_I \,Q(\tilde{\A})\, Q_{\det}}{\delta(\A_{n-1})^{n-1}}\ \vol .
\end{split}
\end{equation*}

By assumption, the arrangements $\A_{n-1}$ and $\A$ are the same for $G$ 
(as $b_H=n-1$ for each $H$), and thus
$$\delta(\A_{n-1})^{n-1} = \prod_{\substack{ H\in\A\\ b_H=n-1}} l_H^{e_H(n-1)}= \prod_{H\in\A} l_H^{b_He_H}=Q(\tilde{\A}).$$ 
Hence, 
$$\frac{\om\wedge\om_J}{\delta(\A_{n-1})}\doteq h_I\,Q_{\det}\ \vol,$$
 and the coefficient $h_I\,Q_{\det}$ is a $\det$-invariant polynomial. By Proposition~\ref{stanley}, it is divisible by $Q_{\det}$, 
which shows that $h_I\in \F[V]\cap\F(V)^G=\F[V]^G$.
\end{proof}

%%%%%%%%%%%%%%%%%%%%%%%%%%%%%%%%%%%%%%%%%%%%%%%%%%%%%%%%%%%%%%%%%%%%%%%%%%%%%%%%
\section{Special Classes of Groups}\label{examples}
In this section, we explain how to obtain generating 1-forms from a set of basic invariants for three classes of reflections groups: groups fixing a single hyperplane pointwise, groups containing the special linear group, and unipotent groups. A
similar pattern can be seen in other examples of reflection
groups with polynomial rings of invariants (in fact, in all other examples
that we have examined). 
An interesting question is whether these examples are instances of a
more general phenomenon. Throughout this section, $\F=\F_{\!q}$ is a finite field of
characteristic~$p$.
%%%%%%%%%%%%%%%%%%%%%%%%%%%%%%%%%%%%%%%%%%%%%%%%%%%%%%%%%%%%%%%%%%%%%%%%%%%%%%%%
\subsection{Pointwise Stabilizers of Hyperplanes}\label{one}
%%%%%%%%%%%%%%%%%%%%%%%%%%%%%%%%%%%%%%%%%%%%%%%%%%%%%%%%%%%%%%%%%%%%%%%%%%%%%%%%
This subsection deals with groups of reflections about a single hyperplane. We show how to produce generating invariant 1-forms from the exterior derivatives of basic invariants as given in \cite{hshepler}, proof of Proposition~2.3. More precisely, we prove that those 1-forms are obtained by dividing by suitable powers of the linear form that defines the hyperplane under consideration: 
\begin{theorem}\label{onehyperplane}
Let $V$ be a vector space of dimension $n$ over a finite field $\F=\F_{\!q}$.
Let $G\leq \Gl (V)$ be a finite group which fixes a hyperplane $H\leq V$
pointwise, and let $e_H$ be the maximal order of a diagonalizable reflection in $G$. 
Let $l_H$ be a linear form defining $H$. Then there exist
basic invariants $f_1,\ldots,f_n$ of $G$ 
and natural numbers $a_i$ ($i=1,\ldots,n-1$) so that 
$$\frac{df_1}{(l_H^{e_H})^{a_1}},\ldots,\frac{df_{n-1}}{(l_H^{e_H})^{a_{n-1}}},\; df_n$$ generate $(\Om^1)^G$ as an $\F[V]^G$-module.
\end{theorem}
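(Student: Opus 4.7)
The plan is to apply Theorem~\ref{solomon}. Since $G$ fixes $H$ pointwise, $G=G_H$ and the reflection arrangement $\A=\{H\}$ consists of the single hyperplane, so
\[
  Q(\tilde{\A})\,Q_{\det}\;=\;l_H^{e_Hb_H}\cdot l_H^{e_H-1}\;=\;l_H^{e_H(b_H+1)-1}.
\]
It therefore suffices to exhibit basic invariants $f_1,\ldots,f_n$ and natural numbers $a_1,\ldots,a_{n-1}$ such that each $\om_i:=df_i/l_H^{e_Ha_i}$ (setting $\om_n:=df_n$) is a polynomial $1$-form and $\om_1\wedge\cdots\wedge\om_n\doteq l_H^{e_H(b_H+1)-1}\,\vol$.

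First I would fix a basis $v_1,\ldots,v_n$ of $V$ as in Lemma~\ref{divisions}, with dual basis $z_1,\ldots,z_n$ satisfying $l_H=z_n$, and adopt the basic invariants built in \cite{hshepler}, proof of Prop.~2.3. There $f_i=z_i$ for $b_H<i<n$ (each such $z_i$ is already $G$-invariant, as it is fixed by both $K_H$ and $s_H$); $f_n=z_n^{e_H}=l_H^{e_H}$; and for $1\le i\le b_H$, $f_i$ is an explicit Dickson-type orbit invariant whose leading term is a power of $z_i$ times a specific power of $l_H$. For $b_H<i<n$ take $a_i=0$, so $\om_i=dz_i$; and $\om_n=e_Hl_H^{e_H-1}dz_n$ is already polynomial. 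The nontrivial work lies in the range $1\le i\le b_H$.

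Next I would determine $a_i$ for $1\le i\le b_H$. By Lemma~\ref{divisions}(2), $l_H^{e_H}$ divides $\partial_jf_i$ whenever $1\le j\le b_H$. Using the explicit shape of the Dickson-type $f_i$, one tracks the $l_H$-adic valuation of each remaining coefficient $\partial_jf_i$ with $b_H<j\le n$, and identifies the largest natural number $a_i$ such that $l_H^{e_Ha_i}$ divides every $\partial_jf_i$; then $\om_i=df_i/l_H^{e_Ha_i}$ is a bona fide polynomial $1$-form. Finally I would verify the top wedge: by Lemma~\ref{divides}, $df_1\wedge\cdots\wedge df_n=c\cdot l_H^M\,\vol$ for some polynomial $c$ coprime to $l_H$ and an integer $M\ge e_H(b_H+1)-1$. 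A degree count using $\deg(\det J)=\sum_i\deg f_i-n$ together with $\prod_i\deg f_i=|G|$ and the known degrees of the Dickson-type invariants pins $M$ down to exactly $e_H(b_H+1)-1+e_H\sum_{i\le b_H}a_i$ and forces $c$ to be a nonzero scalar. Dividing by $l_H^{e_H\sum a_i}$ produces $\om_1\wedge\cdots\wedge\om_n\doteq l_H^{e_H(b_H+1)-1}\,\vol$, and Theorem~\ref{solomon} finishes the proof.

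The main obstacle is the middle step: Lemma~\ref{divisions} gives no direct control on the divisibility of $\partial_jf_i$ by $l_H$ in the range $b_H<j\le n$, so one has to exploit the explicit construction of \cite{hshepler} and carry out careful $l_H$-adic bookkeeping both to certify that the $\om_i$ are polynomial and to ensure that the exponents $a_i$ assemble into precisely the sum needed to make the final wedge match $l_H^{e_H(b_H+1)-1}\,\vol$ up to a scalar.
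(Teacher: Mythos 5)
Your overall strategy coincides with the paper's: reduce to Theorem~\ref{solomon}, note that for a single hyperplane $Q(\tilde{\A})\,Q_{\det}=l_H^{e_H(b_H+1)-1}$, and produce the $\om_i$ by dividing the $df_i$ by powers of $l_H^{e_H}$. But the step you defer --- ``one tracks the $l_H$-adic valuation of each $\partial_j f_i$ \ldots and a degree count pins $M$ down'' --- is not a routine verification; it is the entire content of the proof, and as stated it does not close. Two concrete problems. First, the conclusion is existential for a reason: whether $\om_1\wedge\cdots\wedge\om_n$ (with $a_i$ chosen maximal) lands exactly on $l_H^{e_H(b_H+1)-1}\vol$ depends on the \emph{choice} of basic invariants, since modifying $f_i$ by polynomials in the other $f_j$ changes the $l_H$-valuation of $df_i$. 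A degree count only gives you the inequality $\sum_i\deg f_i-n-e_H\sum_i a_i\geq e_H(b_H+1)-1$ (via Lemma~\ref{divides} applied to the $\om_i$); to get equality you must exhibit specific $f_i$ and compute their $a_i$, and you have not done so. Second, your appeal to ``explicit Dickson-type orbit invariants'' hides the real difficulty: $K_H$ is an essentially arbitrary group of transvections about $H$ and must be made compatible with the diagonalizable reflection $s_H$, so the basic invariants are not simply orbit products of the $z_i$; producing them (and controlling the valuations of their differentials) already requires the inductive machinery of \cite{hshepler}, Proposition~2.3.

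The paper resolves exactly this by induction on a minimal transvection generating set: it passes from $G_k$ to $G_{k+1}=\langle G_k,t_{k+1}\rangle$, adjusts the basic invariants by $f_i'=f_i+c_if_1^{d_i/d_1}$ and $f_1'=h(f_1)$ for an additive polynomial $h$, and --- crucially --- refines the selection of which $f_i$ to relabel as $f_1$ (choosing one of minimal degree not fixed by $t_{k+1}$ \emph{with maximal} $a_i$) so that the updated exponents $a_i'$ can be tracked exactly. The criterion of Theorem~\ref{solomon} is then verified at each stage by a two-case computation according to whether the transvection root space grows (forcing $d_1=1$) or not (forcing $d_1>1$), which is where the identity $e_H\sum_i a_i'=\deg\det J-\bigl(e_H(b_H+1)-1\bigr)$ actually gets established. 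To complete your argument you would need to reproduce this bookkeeping; without it the proof is incomplete.
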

\begin{cor}
If $G\leq \Gl(V)$ is a finite group which fixes a hyperplane pointwise, then $(\Om^1)^G$ is a free $\F[V]^G$-module.
\end{cor}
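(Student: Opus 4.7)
My plan is to apply Theorem~\ref{solomon}: since $\A = \{H\}$, it suffices to exhibit $G$-invariant $1$-forms $\om_1,\ldots,\om_n$ whose wedge product is a nonzero scalar multiple of $Q(\tilde{\A})\,Q_{\det}\,\vol = l_H^{e_H(b_H+1)-1}\,\vol$.

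I would first fix the basis of Lemma~\ref{divisions} and its dual basis $z_1,\ldots,z_n$, so that $l_H = z_n$ and $v_1,\ldots,v_{b_H}$ are transvection root vectors. Then I would take the basic invariants $f_1,\ldots,f_n$ produced in the proof of \cite{hshepler}, Proposition~2.3: $f_n$ is a scalar multiple of $z_n^{e_H}$, the invariants $f_{b_H+1}=z_{b_H+1},\ldots,f_{n-1}=z_{n-1}$ are linear forms, and $f_1,\ldots,f_{b_H}$ are Dickson-type $K_H$-orbit products in $z_1,\ldots,z_{b_H}$ and $z_n$. For $i<n$ let $a_i$ denote the largest nonnegative integer such that $l_H^{e_H a_i}$ divides every coefficient of $df_i$; set $\om_i := df_i/l_H^{e_H a_i}$ and $\om_n := df_n$. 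Each $\om_i$ is a $G$-invariant polynomial $1$-form because $l_H^{e_H}$ is $G$-invariant (using that $K_H$ fixes $l_H$ and $\det(s_H)^{e_H}=1$). Lemma~\ref{divisions} applied to the invariant $1$-form $df_i$ already guarantees that $\partial f_i/\partial z_j$ is divisible by $l_H^{e_H}$ whenever $j\leq b_H$, contributing to the divisibility extracted in $a_i$.

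To complete the proof via Theorem~\ref{solomon}, I must verify $\om_1\wedge\cdots\wedge\om_n \doteq l_H^{e_H(b_H+1)-1}\,\vol$. The wedge equals $J\cdot l_H^{-e_H\sum a_i}\,\vol$ for $J = \det(\partial f_i/\partial z_j)$. Because $f_{b_H+1},\ldots,f_{n-1}$ contribute an identity block to the Jacobian matrix and $f_n$ contributes the entry $e_H z_n^{e_H-1}$ at position $(n,n)$, the matrix is block-triangular and $J = e_H z_n^{e_H-1}\cdot \det(A)$, where $A = (\partial f_i/\partial z_j)_{i,j\leq b_H}$. The main technical obstacle is to compute $\det(A)$; I would do this by passing to the affine coordinates $y_j := z_j/z_n$, in which $K_H$ acts by translations on $\F(z_n)[y_1,\ldots,y_{b_H}]$, the rational functions $\tilde f_i := f_i/z_n^{d_i}$ form a basic invariant system, and their Jacobian $\det(\partial\tilde f_i/\partial y_j)$ is a nonzero scalar---verified by direct computation on the orbit-product structure (for instance inductively along a composition series for $K_H$). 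This yields $\det(A)\doteq z_n^{\sum_{i\leq b_H}(d_i-1)}$, so $J$ is a scalar multiple of a pure power of $l_H$. Comparing the resulting exponent with the divisibility supplied by Lemma~\ref{divides} pins down $\sum a_i$ and confirms the required identity $\om_1\wedge\cdots\wedge\om_n \doteq l_H^{e_H(b_H+1)-1}\,\vol$, after which Theorem~\ref{solomon} gives the theorem and the corollary follows at once.
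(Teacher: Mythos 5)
Your overall architecture is the same as the paper's: both arguments manufacture candidate generators of the form $df_i/l_H^{e_H a_i}$ from a set of basic invariants and then invoke the criterion of Theorem~\ref{solomon}, for which one must check $\om_1\wedge\cdots\wedge\om_n \doteq l_H^{e_H(b_H+1)-1}\vol$. The paper does this by induction on a generating set of transvections, tracking explicitly how each exponent $a_i$ changes at every step; you attempt a single global computation, and its decisive step has a genuine gap. Writing $J\doteq l_H^{N}$ for the Jacobian determinant, you get $\om_1\wedge\cdots\wedge\om_n\doteq l_H^{N-e_H\sum_i a_i}\vol$ and must prove the \emph{equality} $N-e_H\sum_i a_i=e_H(b_H+1)-1$. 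But Lemma~\ref{divides} only gives the inequality $N-e_H\sum_i a_i\geq e_H(b_H+1)-1$, i.e.\ an \emph{upper} bound on $\sum_i a_i$; it cannot ``pin down'' $\sum_i a_i$. To conclude you need the reverse inequality, i.e.\ a matching \emph{lower} bound on the power of $l_H^{e_H}$ dividing each $df_i$, and nothing in your argument supplies it: choosing $a_i$ maximal is the right move \emph{if} the criterion is satisfiable with forms of this shape, but it does not by itself show that the criterion is satisfied. This lower bound is exactly what the paper's inductive bookkeeping (the formulas $a_1'=a_1+d_1m$, resp.\ $a_1'=m-1$, and the case analysis on whether the transvection root space grows) establishes.

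A secondary issue is that several structural claims are asserted rather than proved: that the basic invariants can be chosen with $f_1,\ldots,f_{b_H}$ involving only $z_1,\ldots,z_{b_H},z_n$ (so that the Jacobian is block-triangular), and that the dehomogenized Jacobian $\det(\partial\tilde f_i/\partial y_j)$ is a nonzero scalar. Both are believable --- the first via the $G$-equivariant splitting $\F[V]=\F[z_1,\ldots,z_{b_H},z_n]\otimes\F[z_{b_H+1},\ldots,z_{n-1}]$ with trivial action on the second factor, the second via separability of orbit products under a finite translation group --- but note that the invariants of $G_H$ constructed in \cite{hshepler} are not plain $K_H$-orbit products: they are twisted (via the additive polynomial $h(X)$) to achieve $s_H$-invariance, and this twist changes both the degrees $d_i$ and the exponents $a_i$ that your exponent count would have to control.
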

The corollary is a direct consequence of the theorem since 
the 1-forms $df_1\ldots,df_n$ are linearly independent over $\F(V)^G$.

\begin{proof}[Proof of Theorem]
The invariant ring $\F[V]^G$ is a free polynomial algebra. We use the inductive
description of the basic invariants given in \cite{hshepler} (Proposition~2.3) and show that the theorem
holds in every step of the construction by applying Theorem~\ref{solomon}.

We begin by choosing a basis $z_1,\ldots,z_n$ of $V^*$ as in Lemma~\ref{divisions}.
Fix a set of generating elements $\{s, t_1,\ldots,t_r\}$ of $G$, where
$s$ is a diagonalizable reflection in $G$ of order $e:=e_H$, 
each $t_i$ is a transvection, and $r$ is minimal.
We successively consider the groups $G_i=\langle s, t_1,\ldots,t_i\rangle$.

There is nothing to prove for $G_0=\langle s \rangle$, since this is a
nonmodular group and we can choose all $a_i$ to be zero (cf.~\cite{benson}, Theorem 7.3.1). 

Suppose the theorem holds for $G_k$, and let $f_1,\ldots,f_n$
be basic invariants for $G_k$ with degrees $d_i$ and numbers $a_i$ as in the
statement, with $f_n=l_H^e$. 
By \cite{hshepler}, Proposition~2.3, we know that the degrees $d_i$ are $p$-powers for $i<n$
where $p$ is the characteristic of $\F$.

To construct a set of basic invariants for $G_{k+1}$, we relabel the $f_i$ as
follows: Among all $f_i$ of minimal degree not invariant under
$t_{k+1}$, we choose one with maximal number $a_i$ and label this polynomial $f_1$
(note that we refine the choice in the original procedure at this
point: a posteriori it will become apparent that in fact all the $f_i$ under consideration have the same $a_i$).

Define $f_n':=f_n$ (which
is invariant under $G_{k+1}$), and $a_n':=a_n=0$.
Define $f_2',\ldots, f_{n-1}'$ by $f_i':=f_i + c_i f_1^{\frac{d_i}{d_1}}$ where
the constants $c_i$ are chosen so that the $f_i'$ are invariant under $G_{k+1}$
(see \cite{hshepler}, proof of Proposition~2.3). 
Then
$$
 df_i'= df_i + c_i\frac{d_i}{d_1}f_1^{\frac{d_i}{d_1}-1}df_1.
$$
We record the change in the $a_i$: 
either $d_i=d_1$, in which case $df_i'$ is divisible by $f_n^{\operatorname{min}\{a_i,a_1\}}=f_n^{a_i}$, or $d_i>d_1$, which implies that $p$ divides $\frac{d_i}{d_1}$ and $df_i'=df_i$. In either case, we define $a_i':=a_i$ 
for $i\neq 1$ so that $(f_n')^{a_i'}=(l_H^{e})^{a_i'}$ divides $df_i'$.

We next take the product over the orbit of $f_1$ to produce a polynomial $f_1'$ invariant under $t_{k+1}$. Define 
$$h(X)=\prod\limits_{a\in A} (X+ a z_n^{d_1})\in {\mathbb F}[z_n][X],$$ where $A$ is a certain additive subgroup of ${\mathbb F}$ (defined in loc.\ cit.) of order $|{\mathbb F}_p(\lambda)|$ and $\lambda=\det(s)$. Let $m=(|A|-1)/e$. 
The polynomial $h(X)$ is additive and thus all exponents on $X$ in $h$ are $p$-powers. Let $f_1'=h(f_1)$. Then 
$$df_1' =  d(f_1c z_n^{d_1(|A|-1)})= c z_n^{d_1(|A|-1)}df_1 + f_1 c d_1 (|A|-1)z_n^{d_1(|A|-2)}dz_n,$$ where $c=\prod\limits_{a\in A\setminus\{0\}}a$. 
If $d_1\neq 1$, the second term is zero and we can set $a_1'=a_1+ d_1m$. If $d_1=1$, then $a_1=0$ and the highest power of $f_n$ dividing the new form is $f_n^{m-1}$;
hence we set $a_1'=m-1$.

In order to apply the criterion (Theorem~\ref{solomon}), we need to consider the product of the forms 
$\om_i:=df_i'/f_n^{a_i'}$:
\begin{equation*}
\begin{split}
\om_1\wedge\cdots\wedge \om_n
&=l_H^{-e\sum\limits_{i=1}^n a_i'}J(f_1',\ldots,f_n')\ \vol\\
&\doteq l_H^{-e\sum\limits_{i=1}^n a_i'} J(f_1,\ldots,f_n)z_n^{d_1(|A|-1)}\ \vol\\ 
&=l_H^{d_1(|A|-1)-e(a_1'-a_1)} Q(\tilde{\A}(G_k)) Q_{\det}\ \vol,
\end{split}
\end{equation*}
where $Q(\tilde{\A}(G_k))$ is the polynomial defining $\tilde{\A}$ for $G_k$ (a power of $l_H$) and
$J$ denotes the determinant of the Jacobian matrix
(see proof of Proposition~2.3 of \cite{hshepler}).

We consider two cases. The first case occurs when the dimension of the transvection root space of $G_k$ is the same as that of $G_{k+1}$. Since this dimension is $\operatorname{codim}((V^*)^{K_H})$, the groups $G_k$ and $G_{k+1}$ have the same number of {\em linear} invariants, and thus $d_1>1$. Then $Q(\tilde{\A}(G_k))=Q(\tilde{\A}(G_{k+1}))$, and 
$$\frac{l_H^{d_1(|A|-1)}}{l_H^{e(a_1'-a_1)}}=1,$$
 so the $\om_i$ satisfy the criterion. 
The second case occurs when the dimension of the transvection root space increases. In this case, $d_1=1$, 
and $Q(\tilde{\A}(G_{k+1}))=Q(\tilde{\A}(G_k))l_H^{\;e}$ by definition. Since
$$\frac{l_H^{d_1(|A|-1)}}{l_H^{e(a_1'-a_1)}}=\frac{l_H^{d_1(|A|-1)}}{l_H^{e(a_1')}}=l_H^{e},$$
the criterion is satisfied in this case as well.
\end{proof}
\vspace{2ex}
\begin{remark}
The point of Theorem~\ref{onehyperplane} is not primarily to provide generating forms. In fact, it is easy to see that the forms
$$
\begin{aligned}
 &z_n^{e_H}dz_1-z_1 z_n^{e_H-1}dz_n,\ 
 z_n^{e_H}dz_2-z_2 z_n^{e_H-1}dz_n,\ \ldots,\ 
 z_n^{e_H}dz_k-z_k z_n^{e_H-1}dz_n,\\ 
 &dz_{k+1}, \ldots, dz_{n-1},\
 z_n^{e_H-1} dz_n
\end{aligned}
$$
generate $(\Om^1)^G$, where $k:=b_H$ is the dimension of the transvection root space of~$G$. The theorem shows more: There exist basic invariants so that generators can be produced from their exterior derivatives. In fact, generators are found by dividing the exterior derivatives by (powers of) linear forms defining the reflecting hyperplane.

\end{remark}

%%%%%%%%%%%%%%%%%%%%%%%%%%%%%%%%%%%%%%%%%%%%%%%%%%%%%%%%%%%%%%%%%%%%%%%%%%%%%%%%
%%%%%%%%%%%%%%%%%%%%%%%%%%%%%%%%%%%%%%%%%%%%%%%%%%%%%%%%%%%%%%%%%%%%%%%%%%%%%%%%

\subsection{Groups containing the special linear group}
We turn to the case when the finite group $G$ contains
$\Sl_n(\F_{\!q})$, i.e., $\Sl_n(\F_{\!q})\leq G\leq \Gl_n(\F_{\!q})$
for a finite field $\F_{\!q}$.
%A: Did you really want "parametrized" instead of "parameterized"?
Such groups are parametrized by the order $e$ of their image under the
determinant homomorphism $\det:\Gl_n(\F_{\!q})\rightarrow \F_{\!q}^\times$. Note that
all these groups are generated by reflections (those generating $\Sl_n$ plus a
diagonalizable reflection with eigenvalue of order $e$), and all of them have a
polynomial ring of invariants (see for example, \cite{Poly}, comment after
Theorem~8.1.8). With this example, we illustrate the notation used in this paper as 
well as some of the results.

We summarize the setup: Since $\Sl_n(\F_{\!q})\leq G$, every hyperplane in $V$ is a reflecting hyperplane, and there are $\frac{q^n-1}{q-1}$ such hyperplanes, since there are $q^n-1$ nonzero elements in $V^*$, $q-1$ of which are nonzero scalar multiples of any fixed one (and thus define the same hyperplane). This describes the reflection arrangement $\A$. The multi-arrangement $\tilde{\A}$ is defined via the numbers $e_H$ and $b_H$ for each hyperplane $H$. All hyperplanes are in the same $G$-orbit, so in fact $e_H$ and $b_H$ do not depend on $H$. Since $G$ contains $\Sl_n(\F_{\!q})$, $b_H=n-1$ for every $H$ and all transvection root spaces are maximal. Moreover, $e_H=e$, the order of the image of $G$ under the determinant homomorphism. Consequently, $Q(\tilde{\A}) = \prod\limits_{H\leq V} l_H^{(n-1)e}$, and its degree is $\frac{(q^n-1)e(n-1)}{q-1}$.

We first describe generators for the module of $1$-forms invariant under the full
group $\Gl_n(\F_{\!q})$.
 The ring of invariant polynomials 
$\F[V]^{\Gl_n(\F_{\!q})}$ is called the {\bf Dickson algebra}, and the {\bf Dickson invariants} 
$$
d_{n,i} \ := \ \sum_{
         \substack{W \leq V\\ \rule[.7ex]{0ex}{1ex}\codim W \, = \, i}} 
         \ \ \prod_{\substack{v \, \in \, V^*,\\ v|_W\neq 0}} v
$$
of degree $q^n-q^i$ (for $i=0,\ldots,n-1$)
form a set of basic invariants.
The determinant of the Jacobian matrix of the Dickson invariants
is $$J = \prod_{H\in\A} l_H^{(n-1)(q-1)+(q-2)}$$ (see \cite{hshepler}, Section 4).
Hence, by Theorem~\ref{solomon}, $(\Om^1)^{\Gl_n(\F_{\!q})}$ is a free $\F_{\!q}[V]^{\Gl_n(\F_{\!q})}$-module generated by the exterior derivatives of the Dickson invariants.

We use the exterior derivatives of the Dickson invariants to 
construct generators for 
$(\Om^1)^G$. Consider a hyperplane $H\leq V$, and choose a basis $z_1,\ldots,z_n$ for $V^*$ as in Lemma~\ref{divisions}. The polynomials 
$$f_1^H:=z_1^q-z_1z_n^{q-1},\quad\ldots,\quad f_{n-1}^H:=z_{n-1}^q-z_{n-1}z_n^{q-1},\quad f_n^H:=z_n^{q-1}$$  
are invariant under the pointwise stabilizer $\Gl_n(\F_{\!q})_H$ and algebraically independent. The product of their degrees equals $(q-1)\cdot q^{n-1}=|\Gl_n(\F_{\!q})_H|$, so by \cite{kemper}, Theorem~7.3.5., $$\F[V]^{\Gl_n(\F_{\!q})_H}=\F[f_1^H,\ldots,f_n^H].$$
Consequently, there are polynomials $p_i$ for which $d_{n,i}=p_i(f_1^H,\ldots,f_n^H)$. Then 
$$\frac{\partial d_{n,i}}{\partial z_j}=\sum \limits_{k=1}^n \frac{\partial d_{n,i}}{\partial f_k^H}\frac{\partial f_k^H}{\partial z_j}$$
by the chain rule. Since $\operatorname{char}(\F)=p>0$, $\frac{\partial f_k^H}{\partial z_j}$ is divisible by $z_n^{q-2}=l_H^{q-2}$ (for all $k,j$), and the same is true for $d(d_{n,i})$. In particular, each $d(d_{n,i})$ is divisible by $f:=\prod\limits_{H\leq V} l_H^{q-e-1}$, which is invariant under $G$. 
We may therefore define forms 
$$\om_i:=\frac{d(d_{n,i-1})}{f}\in \Om^G.$$
Their wedge product is
\begin{equation*}
\begin{split}
\om_1\wedge\cdots\wedge\om_n
&=d(d_{n,0})\wedge\cdots\wedge d(d_{n,n-1})\cdot f^{-n}\\
&=\frac{J}{f^n}\ \vol\\
&\doteq \prod_{H\in\A}l_H^{(n-1)(q-1)+(q-2) -n(q-e-1)}\ \vol
=\prod_{H\in\A}l_H^{ne-1}\ \vol\\
&=\prod_{H\in\A}l_H^{(n-1)e +(e-1)}\ \vol
=Q(\tilde{\A})Q_{\det}\ \vol .
\end{split}
\end{equation*}
Hence, by Theorem~\ref{solomon}, $\om_1, \ldots, \om_n$ generate $(\Om^1)^G$ as a free
$\F[V]^G$-module.

Moreover, Theorem~\ref{freealgebra} implies that $\Om^G$ is a free algebra under the twisted wedging 
$$(\mu,\nu)\mapsto \frac{\mu\wedge\nu}{\delta(\A_{n-1})}= \frac{\mu\wedge\nu}{\prod\limits_{H\leq V} l_H^e}\ .$$

In the case $G=\Gl_n(\F_{\!p})$, generators for the invariant forms as a module over the Dickson algebra were given by Mui (\cite{mui}) in terms of Vandermonde-like determinants. 
He also lists the relations among those generators under the usual wedging. The above calculation simplifies his approach.

\begin{remark}
Alternatively, for $\F=\F_{\!p}$, one may start with generators of the set
of 1-forms invariant under $\Sl_n(\F_{\!p})$ listed in \cite{Adem}, Definition III.2.8 (following Mui's work), which
are $\det^{-1}$-invariant under $\Gl_n(\F_{\!p})$.  Multiplying these by $Q_{\det}$
produces forms invariant under $G$.  A calculation similar to the one above then shows that they generate $(\Om^1)^G$ as a module, and more generally, $\Om^G$ as a free algebra under the twisted wedging. 
\end{remark}

\begin{remark}
The pattern in the above example holds in greater generality
when $\F=\F_{\!q}$ is a finite field.
Suppose $G\leq\Gl(V)$ has basic invariants $f_1, \ldots, f_n$ and every reflecting
hyperplane for $G$ has a diagonalizable reflection of maximal order $q-1$ 
(as in the case $G=\Gl_n(\F_{\!q})$).
One can then prove that $(\Om^1)^G$ is a free $\F[V]^G$-module generated by
$df_1, \ldots, df_n$.
In fact, if
$G'\leq G$ is any subgroup with polynomial invariants sharing the same reflection arrangement,
$(\Om^1)^{G'}$ is a free $\F[V]^{G'}$-module provided each transvection root space for $G'$ is maximal (as in the case $\Sl_n(\F_{\!q}) \leq G' \leq \Gl_n(\F_{\!q})$).
As above, dividing each $df_i$ by $\prod\limits_{H\in \A}l_H^{q-e_H-1}$ produces generators for $(\Om^1)^{G'}$ from basic invariants for $G$.
\end{remark}

%%%%%%%%%%%%%%%%%%%%%%%%%%%%%%%%%%%%%%%%%%%%%%%%%%%%%%%%%%%%%%%%%%%%%%%%%%%%%%%%
\subsection{Unipotent Groups}
%%%%%%%%%%%%%%%%%%%%%%%%%%%%%%%%%%%%%%%%%%%%%%%%%%%%%%%%%%%%%%%%%%%%%%%%%%%%%%%%
Another example is given by the unipotent group $G=\operatorname{U}_n(\F_{\!q})$,
 which we regard in its 
representation as lower triangular matrices of determinant~1.

Consider the polynomials
$$
\begin{aligned}
f_1&:=z_1\\
f_2&:=\prod\limits_{\alpha_1 \in \F_{\!q}}( z_2 + \alpha_1 z_1)\\
\vdots\\
f_n&:=\prod\limits_{(\alpha_1,\ldots,\alpha_{n-1}) \in \F_{\!q}^{n-1}} (z_n + \alpha_{n-1} z_{n-1} + \cdots + \alpha_1 z_1).
\end{aligned}
$$
These polynomials are invariant under $G$, algebraically independent, and have degrees $1$, $q$, $\ldots$, $q^{n-1}$. Since the product of these degrees is exactly the order of $G=\operatorname{U}_n(\F_{\!q})$, they form a set of basic invariants by \cite{kemper}, Theorem 7.3.5.

\begin{remark}\label{remark}
Note that $f_i=\prod\limits_{\substack{H\in\A\rule[-.5ex]{0ex}{2ex} \\
    b_H=n-i}}l_H$.  In fact, $f_i$ 
is the product over the $G$-orbit of any fixed $l_H$ with $b_H=n-i$. 
\end{remark}

\begin{lemma}\label{unipotents}
With notation as above, $df_k$ is divisible by $\prod\limits_{i<k}f_i^{q-2}$ for $k=1,\ldots,n$. 
\end{lemma}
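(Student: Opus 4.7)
The plan is to prove the divisibility by induction on $k$, but with a claim slightly stronger than the lemma itself. Set $W_{k-1} := \operatorname{span}_{\F_{\!q}}(z_1,\ldots,z_{k-1}) \subseteq V^*$ (with $W_0 = \{0\}$) and
$$P_k(Y) := \prod_{v \in W_{k-1}}(Y+v),$$
so that $f_k = P_k(z_k)$. I would prove: for every $k \ge 1$ and every $j \ge k$, the $1$-form $dP_k(z_j)$ is divisible by $\prod_{i<k} f_i^{q-2}$. The lemma is then the case $j = k$.

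The engine is a recursion for $P_k$. Decomposing $W_{k-1} = W_{k-2} \oplus \F_{\!q}\, z_{k-1}$ and using that $P_{k-1}$ is $\F_{\!q}$-linear in its argument---because $P_{k-1}(Y) = \sum_i c_i Y^{q^i}$ and $a^{q^i}=a$ for $a\in \F_{\!q}$---one gets
$$P_k(Y) = \prod_{a\in \F_{\!q}}\bigl(P_{k-1}(Y) + a\,f_{k-1}\bigr).$$
Combined with the elementary identity $\prod_{a\in \F_{\!q}}(X+aC) = X^q - C^{q-1}X$ (both sides monic of degree $q$ with common root set $\{aC:a\in\F_{\!q}\}$), this yields
$$P_k(Y) = P_{k-1}(Y)^q - f_{k-1}^{q-1}\, P_{k-1}(Y).$$
Specializing $Y = z_j$ and applying $d$, one uses $q\equiv 0$ and $q-1\equiv -1 \pmod{p}$: the $q$-th power drops out entirely and the product rule on $f_{k-1}^{q-1}P_{k-1}(z_j)$ produces the key identity
$$dP_k(z_j) = f_{k-1}^{q-2}\bigl[P_{k-1}(z_j)\,df_{k-1} \;-\; f_{k-1}\,dP_{k-1}(z_j)\bigr].$$

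With this in hand, the induction closes immediately. For $j \ge k$, the inductive hypothesis at stage $k-1$ supplies two divisibilities by $\prod_{i<k-1}f_i^{q-2}$: one for $df_{k-1} = dP_{k-1}(z_{k-1})$ (the case $j' = k-1$) and one for $dP_{k-1}(z_j)$ (since $j \ge k > k-1$). Hence the bracket is divisible by $\prod_{i<k-1}f_i^{q-2}$, and the prefactor $f_{k-1}^{q-2}$ supplies the remaining factor to give divisibility of $dP_k(z_j)$ by $\prod_{i<k}f_i^{q-2}$. The base case $k=1$ is vacuous: $P_1(Y) = Y$ and the target product is empty.

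The main obstacle is identifying the right strengthening of the claim. If one only tracks the case $j = k$ (the $f_k$'s themselves), the recursion is fatal because it introduces the auxiliary polynomial $P_{k-1}(z_k)$, which is not a basic invariant. Enlarging the inductive hypothesis to cover $dP_k(z_j)$ for all $j \ge k$ simultaneously makes the recursion self-contained, and the characteristic-$p$ collapse in $P_k = P_{k-1}^q - f_{k-1}^{q-1}P_{k-1}$ then produces precisely the factor $f_{k-1}^{q-2}$ needed to advance the induction by one step.
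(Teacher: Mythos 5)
Your proof is correct, but it takes a genuinely different route from the paper's. The paper argues hyperplane by hyperplane: for each fixed $i<k$ it regroups the defining product of $f_k$ over the variable $\alpha_i$ to write $f_k=\prod_{\bar\alpha}(z_{\bar\alpha}^q-z_i^{q-1}z_{\bar\alpha})$, differentiates termwise to see that every partial derivative $\partial f_k/\partial z_l$ is divisible by $z_i^{q-2}$, and then uses the $G$-invariance of $df_k$ together with the fact that $f_i$ is the orbit product of $z_i$ (and coprimality of distinct linear forms) to upgrade $z_i^{q-2}\mid df_k$ to $\prod_{i<k}f_i^{q-2}\mid df_k$. You instead run a structural induction through the theory of linearized polynomials: the recursion $P_k=P_{k-1}^q-f_{k-1}^{q-1}P_{k-1}$ collapses under $d$ in characteristic $p$ to the exact factorization $dP_k(z_j)=f_{k-1}^{q-2}\bigl[P_{k-1}(z_j)\,df_{k-1}-f_{k-1}\,dP_{k-1}(z_j)\bigr]$, and your strengthened hypothesis (all $j\ge k$) makes the induction self-contained. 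This bypasses the invariance and coprimality arguments entirely, exhibits the factor $\prod_{i<k}f_i^{q-2}$ explicitly rather than one prime at a time, and as a bonus the $dz_k$-coefficient of your identity iterates to give $\partial f_k/\partial z_k\doteq\prod_{i<k}f_i^{q-1}$, which the paper derives separately right after the lemma. The paper's computation is more elementary; yours is cleaner and yields more. One small point to tidy: you quote the additivity and $\F_{\!q}$-linearity of $P_{k-1}$ as coming from its shape $\sum_i c_iY^{q^i}$, which is a standard fact about subspace polynomials but is not free; the cleanest fix is to carry ``$P_k$ is $\F_{\!q}$-linearized'' as part of your induction ($P_1(Y)=Y$ is, and the recursion visibly preserves the property), which costs nothing.
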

\begin{proof}
Because each $df_k$ is invariant, and because $f_i$ is the product over the orbit of $z_i$, it suffices to show that $z_i^{q-2}$ divides $df_k$ for each $i<k$. We show that $z_i^{q-2}$ divides $\frac{\partial f_k}{\partial z_l}$ for each $i<k$ and each $l=1,\ldots,n$.

For $i<k$, rewrite
\begin{equation*}\begin{split}
f_k&=\prod\limits_{(\alpha_1,\ldots,\alpha_{k-1}) \in \F_{\!q}^{k-1}} (z_k + \alpha_{k-1} z_{k-1} + \cdots + \alpha_1 z_1)\\&=\prod\limits_{\bar{\alpha} = (\alpha_1,\ldots,\widehat{\alpha_i},\ldots,\alpha_{k-1}) \in \F_{\!q}^{k-2}}\;\;
\prod\limits_{\alpha_i\in\F_{\!q}}(z_k + \alpha_{k-1} z_{k-1} + \cdots + \alpha_1 z_1)\\
&=\prod\limits_{\bar{\alpha} \in \F_{\!q}^{k-2}}\;\;\prod\limits_{\alpha_i\in\F_{\!q}}(z_{\bar{\alpha}} + \alpha_iz_i)\\
&= \prod\limits_{\bar{\alpha} \in \F_{\!q}^{k-2}} (z_{\bar{\alpha}}^q - z_i^{q-1}z_{\bar{\alpha}}),
\end{split}
\end{equation*}
where $z_{\bar{\alpha}}=z_k + \alpha_{k-1}z_{k-1} + \cdots + \widehat{\alpha_iz_i}+\cdots +\alpha_1z_1$.
Consequently, 
\begin{equation*}\begin{split}
\frac{\partial f_k}{\partial z_l}
& = \sum \limits_{\bar{\alpha}\in\F_{\!q}^{k-2}} \left(
\frac{\partial}{\partial z_l}(z_{\bar{\alpha}}^q-z_i^{q-1}z_{\bar{\alpha}})
\prod\limits_{\bar{\beta}\neq \bar{\alpha}} (z_{\bar{\beta}}^q - z_i^{q-1}z_{\bar{\beta}})\right)\\
&=\sum \limits_{\bar{\alpha}\in\F_{\!q}^{k-2}}\left(\left(z_i^{q-2}\delta_{il}z_{\bar{\alpha}}-z_i^{q-1}\frac{\partial z_{\bar{\alpha}}}{\partial z_l}\right)\prod\limits_{\bar{\beta}\neq \bar{\alpha}} (z_{\bar{\beta}}^q - z_i^{q-1}z_{\bar{\beta}})\right)
\end{split}\end{equation*}
which is divisible by $z_i^{q-2}$ as claimed ($\delta_{il}$ is the Kronecker delta symbol). 
\end{proof}

Moreover, the calculation in the above proof shows that $\frac{\partial f_k}{\partial z_k}$ is divisible by $z_i^{q-1}$ (since the term involving $\delta_{ik}$ is zero), and thus by $\prod\limits_{i<k}f_i^{q-1}$. 
The degree of this product is $(q-1)(1+q+\cdots +q^{k-1})=q^k-1$, 
which is also the degree of $\frac{\partial f_k}{\partial z_k}$, so $\frac{\partial f_k}{\partial z_k}\doteq \prod\limits_{i<k}f_i^{q-1}$.

We assert that the 1-forms 
$\displaystyle{\om_k:=\frac{df_k}{\prod\limits_{i<k}f_i^{q-2}}
}$ 
generate $(\Om^1)^G$.
To see this, consider the Jacobian matrix of the $f_i$. Note that $\frac{\partial f_k}{\partial z_l}=0$ if $l>k$, so this is a lower triangular matrix, and its determinant $J$ is the product of its diagonal entries:
$$J=\prod\limits_{k=1}^n \frac{\partial f_k}{\partial z_k} = \left(
\prod\limits_{k=1}^n\;\;\prod\limits_{i<k}f_i\right)^{q-1}.$$

Consequently, 
\begin{equation*}
\begin{split}
\om_1\wedge\cdots\wedge\om_n
&=
\left(\prod\limits_{k=1}^n\;\;\prod\limits_{i<k}f_i\right)^{2-q}
df_1\wedge\cdots\wedge df_n \\
&\doteq 
\left(\prod\limits_{k=1}^n\;\;\prod\limits_{i<k}f_i\right)^{2-q}\ J \ \vol\\
&=\left(\prod\limits_{k=1}^n\;\;\prod\limits_{i<k}f_i\right) \ \vol\\
%&\doteq J\left(f_1^{(p-2)(n-1)}f_2^{(p-2)(n-2)}\cdots f_{n-1}^{p-2}\right)^{-1}\vol\\
&=\prod\limits_{i=1}^n f_i^{n-i}\ \vol \\
&=\prod\limits_{H \in \A} l_H^{b_H}\ \vol\quad\text{ by Remark~\ref{remark}}\\ 
%&=\prod\limits_{k=1}^n \left(\prod\limits_{\substack{H\in\A\rule[-.5ex]{0ex}{2ex} \\ b_H=n-k}}l_H \right)^{n-k}
&=Q(\tilde{\A})\ \vol=Q(\tilde{\A})Q_{\det}\ \vol
%\qquad \text{since $e_H=1$ for all $H$},
\end{split}
\end{equation*}
since $e_H=1$ for all $H$, and the claim follows by Theorem~\ref{freealgebra}.

For $\F=\F_{\!p}$, another description of this module of invariant forms 
can be found in \cite{mui}.

%%%%%%%%%%%%%%%%%%%%%%%%%%%%%%%%%%%%%%%%%%%%%%%%%%%%%%%%%%%%%%%%%%%%%%%%%%%%%%%%

\section{Invariants Relative to a Character}
\label{chi}

The results in Section~\ref{free} have generalizations to relative invariants with respect to a linear character of the reflection group $G$. We first define the corresponding arrangements and polynomials.
For any linear character $\chi$ of $G$, define
$$
 Q_{\chi}:= \prod_{H\in\A} l_H^{a_H},
$$
where $a_H$ is the smallest nonnegative integer satisfying
$\chi(s_H)= \det^{-a_H}(s_H)$.
Stanley proved \cite{stanley} the following analogue of Proposition~\ref{stanley} for
complex reflection groups 
(again, the proof extends to arbitrary characteristic, see the remarks in Section~\ref{free}).
\begin{prop}\label{chistanley}
If $G$ is a reflection group,
then $\F[V]^G_\chi = \F[V]^G Q_\chi$. 
\end{prop}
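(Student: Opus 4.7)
The plan is to adapt the strategy of Stanley's proof for $\chi = \det$ (Proposition~\ref{stanley}) to arbitrary linear characters, splitting the claim into: (A) $Q_\chi$ itself is a $\chi$-semi-invariant, giving the inclusion $\F[V]^G\, Q_\chi \subseteq \F[V]^G_\chi$; and (B) every $\chi$-semi-invariant polynomial is divisible by $Q_\chi$ with quotient in $\F[V]^G$, giving the reverse inclusion.

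I would begin with (B), which is the cleaner local argument. First, observe that $a_H$ is well-defined: $\lambda := \det(s_H)$ has order exactly $e_H$, so $\langle \lambda \rangle$ is the full group of $e_H$-th roots of unity and contains $\chi(s_H)$ (which satisfies $\chi(s_H)^{e_H} = 1$). Given $f \in \F[V]^G_\chi$, fix $H \in \A$ with $s_H \neq 1$ and write $f = l_H^k g$ with $l_H \nmid g$. From $s_H(l_H) = \lambda^{-1} l_H$, the identity $s_H(f) = \lambda^{-a_H} f$ becomes $s_H(g) = \lambda^{k-a_H} g$; reducing modulo $l_H$ and applying Lemma~\ref{delta} forces $\lambda^{k-a_H} = 1$, so $k \geq a_H$. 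Pairwise coprimality of the $l_H$ then gives $Q_\chi \mid f$, and once (A) is in hand, $f / Q_\chi$ is manifestly $G$-invariant.

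For (A), since $G$ is generated by reflections and $g \mapsto g(Q_\chi)/Q_\chi$ defines a character $G \to \F^\times$, it suffices to check agreement with $\chi$ on reflections. A transvection $t$ contributes $\chi(t) = 1$ (because $\chi(t)^p = 1$ in $\F^\times$, a group of order coprime to $p$); I would then verify $t(Q_\chi) = Q_\chi$ by using $t(l_H) = l_H$ on its own hyperplane and running a $\langle t \rangle$-orbit analysis on $\A$ (cycles of length $p$ accumulate scalar $1$ since $t^p = 1$, and fixed hyperplanes contribute scalar $c$ with $c^p = 1$, hence $c = 1$). For a diagonalizable $s_H$, factor $Q_\chi = l_H^{a_H} \cdot \prod_{H' \neq H} l_{H'}^{a_{H'}}$. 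The first factor yields exactly $\chi(s_H)$ by construction. For the second, I would decompose $\A \setminus \{H\}$ into $\langle s_H \rangle$-orbits $\mathcal{O}'$ and show each $\prod_{H'' \in \mathcal{O}'} l_{H''}$ is $s_H$-invariant: choosing coordinates as in Lemma~\ref{divisions} so that $l_H = z_n$ and $s_H$ fixes $z_1, \ldots, z_{n-1}$ while scaling $z_n$ by $\lambda^{-1}$, no $l_{H'}$ with $H' \neq H$ is a pure $z_n$-multiple, and comparing the diagonal action of $s_H^{|\mathcal{O}'|}$ on $l_{H'}$ with the scalar $\prod c$ accumulated around the orbit forces $\prod c = 1$. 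Since $a_{H'}$ depends only on the $G$-orbit of $H'$ (via $s_{gH'} = g s_{H'} g^{-1}$ combined with Lemma~\ref{orbits}), raising each orbit product to this common exponent preserves invariance.

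The main obstacle is this orbit-by-orbit scalar calculation in (A); everything else is a direct adaptation of the $\chi = \det$ case. Combining (A) and (B) then yields $\F[V]^G_\chi = \F[V]^G\, Q_\chi$.
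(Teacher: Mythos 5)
Your proof is correct, but note that the paper does not actually prove Proposition~\ref{chistanley}: it is quoted from Stanley (for $\C$) with a remark that the argument extends to arbitrary characteristic, deferring to Nakajima and to Smith's account for Proposition~\ref{stanley}. So you are supplying an argument the paper outsources, and what you supply is the standard Stanley-style local-global argument, correctly adapted to the modular setting. Part (B) is exactly right: the computation $s_H(g)=\lambda^{k-a_H}g$ together with Lemma~\ref{delta} and $-e_H<k-a_H$ forces $k\ge a_H$, and hyperplanes with $s_H=1$ need no attention since $a_H=0$ there. Your well-definedness remark for $a_H$ is the key point that makes the definition sensible (and it also shows $a_H$ is independent of the choice of $s_H$, since $\chi$ and $\det$ both factor through $G_H/K_H$ with $\det$ faithful there). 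In part (A), the orbit-scalar computations are sound: the cocycle identity identifies the scalar accumulated around a $\langle s_H\rangle$-orbit with the scalar by which the diagonal element $s_H^{|\mathcal O'|}$ acts on a single $l_{H'}$, and that scalar is $1$ because $l_{H'}$ is not proportional to $z_n$; the transvection case uses only that $\F^\times$ has no $p$-torsion. The one point you should make explicit is that checking $\psi(g):=g(Q_\chi)/Q_\chi$ against $\chi$ on transvections and on the distinguished maximal-order reflections $s_H$ suffices: a priori $G$ is generated by \emph{all} its reflections, including diagonalizable ones of non-maximal order, but since every reflection about $H$ lies in $G_H=K_H\rtimes\langle s_H\rangle$ and both $\psi$ and $\chi$ are multiplicative and trivial on $K_H$, agreement on $s_H$ and on transvections gives agreement on all of $G_H$, hence on a generating set. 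With that one-line patch your argument is complete and self-contained, which is arguably more than the paper offers for this statement.
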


We next define a $\chi$-version of the multi-arrangement $\tilde{\A}$.
Let $\tilde{\A}_\chi$ be the multi-arrangement defined by the polynomial
$$
  Q(\tilde{\A}_\chi)
   = \prod_{\substack{H\in\A\rule[-.5ex]{0ex}{2ex} \\ \chi(s_H)=1}} 
             l_H^{\,e_H b_H}.
$$
The following generalizes results from Shepler~\cite{shepler}.
%%%%%%%%%%%%%%%%%%%%%%%%%%%%%%%%%%%%%%%%%%%%%%%%%%%%%%%%%%%%%%%%%%
\begin{lemma}\label{chidivides}
Suppose $G\leq \Gl_n(\F)$
is a reflection group and let $\chi$ be a linear character of $G$.
If $\om_1,\ldots, \om_n$ are $\chi$-invariant $1$-forms,
then 
$Q(\tilde{\A}_\chi)
\, Q_\chi^{n-1}\, Q_{\chi\cdot\det}$
divides $\om_1 \wedge\cdots\wedge \om_n$.
\end{lemma}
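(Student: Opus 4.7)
The plan is to adapt the proof of Lemma~\ref{divides} by localizing at each reflecting hyperplane $H$ and tracking how the twist by $\chi$ alters the divisibilities obtained in Lemma~\ref{divisions}. Fix $H\in\A$ and choose a basis $z_1,\ldots,z_n$ of $V^*$ as in Lemma~\ref{divisions}, so that $l_H=z_n$ and $s_H$ fixes $z_1,\ldots,z_{n-1}$ while sending $z_n$ to $\lambda^{-1}z_n$, where $\lambda:=\det(s_H)$ has order $e_H$. It suffices to show that $l_H^{N_H}$ divides $\om_1\wedge\cdots\wedge\om_n$, where $N_H$ is the exponent of $l_H$ in $Q(\tilde{\A}_\chi)\,Q_\chi^{n-1}\,Q_{\chi\cdot\det}$; combining over distinct hyperplanes (whose defining linear forms are pairwise coprime) then gives the claim.

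The crucial preliminary observation is that in positive characteristic $p$ (the only setting in which transvections exist), $\F^\times$ contains no element of order $p$, so $\chi$ is trivial on every transvection. Hence every $\chi$-invariant $1$-form is automatically $K_H$-invariant, and part~(1) of Lemma~\ref{divisions} applies verbatim: if $\om = \sum_j u_j\,dz_j$ is $\chi$-invariant, then $l_H$ divides $u_j$ for each $j\in\{1,\ldots,b_H\}$.

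Next I would refine part~(2) to the $\chi$-semi-invariant setting. Comparing coefficients of $dz_j$ in the identity $s_H(\om)=\chi(s_H)\om=\lambda^{-a_H}\om$ gives $s_H(u_j)=\lambda^{-a_H}u_j$ for $j<n$ and $s_H(u_n)=\lambda^{1-a_H}u_n$, which force each monomial of $u_j$ with $j<n$ to have $z_n$-degree $\equiv a_H \pmod{e_H}$, and each monomial of $u_n$ to have $z_n$-degree $\equiv a_H-1 \pmod{e_H}$. Combining with the divisibility from the previous paragraph, the minimum power $m_j$ of $l_H=z_n$ dividing $u_j$ is
$$
 m_j = \begin{cases} e_H & j\leq b_H,\ a_H=0,\\ a_H & j\leq b_H,\ a_H\geq 1,\\ a_H & b_H<j<n,\\ e_H-1 & j=n,\ a_H=0,\\ a_H-1 & j=n,\ a_H\geq 1. \end{cases}
$$

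Writing $\om_k=\sum_j u_{k,j}\,dz_j$, so that $\om_1\wedge\cdots\wedge\om_n = \det(u_{k,j})\,\vol$, each term in the Leibniz expansion of the determinant is divisible by $\prod_j l_H^{m_j} = l_H^{\sum_j m_j}$. A direct case check gives $\sum_j m_j = b_H e_H + (e_H-1)$ when $a_H=0$ and $\sum_j m_j = na_H-1$ when $a_H\geq 1$. On the target side, $Q(\tilde{\A}_\chi)$ contributes $e_H b_H$ exactly when $a_H=0$, $Q_\chi^{n-1}$ contributes $(n-1)a_H$, and $Q_{\chi\cdot\det}$ contributes the smallest nonnegative $c_H$ with $c_H\equiv a_H-1\pmod{e_H}$, namely $a_H-1$ if $a_H\geq 1$ and $e_H-1$ if $a_H=0$. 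These totals agree case by case, so $N_H = \sum_j m_j$. The main obstacle is just the bookkeeping around $a_H=0$ versus $a_H\geq 1$; otherwise the argument is a direct adaptation of Lemma~\ref{divides}.
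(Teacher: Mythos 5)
Your proof is correct, and its first half (getting the $Q(\tilde{\A}_\chi)\,Q_\chi^{n-1}$ factor from $K_H$-invariance via Lemma~\ref{divisions}(1) and from the eigenvalue equation $s_H(u_j)=\lambda^{-a_H}u_j$ for $j<n$) coincides with the paper's argument; your observation that $\chi$ kills every transvection, hence $\chi(K_H)=1$, is exactly the justification the paper leaves implicit. Where you genuinely diverge is in the last factor: the paper stops the local analysis there, observes that $\om_1\wedge\cdots\wedge\om_n\,(Q(\tilde{\A}_\chi)Q_\chi^{n-1})^{-1}$ is a $\chi$-invariant $n$-form, hence equals $f\,\vol$ with $f$ a $(\chi\det)$-invariant polynomial, and invokes Proposition~\ref{chistanley} to conclude $Q_{\chi\det}\mid f$. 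You instead extract the exponent $c_H$ of $l_H$ in $Q_{\chi\det}$ locally, from the relation $s_H(u_n)=\lambda^{1-a_H}u_n$ forcing the $z_n$-degrees of $u_n$ to lie in $a_H-1 \bmod e_H$, and then verify by the Leibniz expansion of $\det(u_{k,j})$ that the total local exponent $\sum_j m_j$ matches the exponent of $l_H$ in the target in both cases $a_H=0$ and $a_H\geq 1$ (your bookkeeping checks out, including the degenerate case $s_H=1$, $e_H=1$). Your route is more self-contained --- it avoids the structure theorem for relative invariants entirely --- at the price of the case analysis; the paper's route is shorter but leans on Stanley--Nakajima. Both are valid proofs of the lemma.
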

%%%%%%%%%%%%%%%%%%%%%%%%%%%%%%%%%%%%%%%%%%%%%%%%%%%%%%%%%%%%%%%%
\begin{proof}
Fix some reflecting hyperplane $H \in \A$ with diagonalizable reflection 
$s:=s_H$ of (maximal) order $e_H$.
Choose a basis of $V$ and $V^*$ as in Lemma~\ref{divisions}. 
Then $s$ sends $dz_i$ to $dz_i$ for $i\neq n$
and $dz_n$ to $\lambda^{-1}dz_n$ where $\lambda:=\det(s)$.

Let $\mu$ be any $\chi$-invariant $1$-form and write $\mu=\sum_{i} u_i \ dz_i$.
As before, let $K_H$ be the set of elements of determinant~1 in $G_H$. 
Since $\chi(K_H)=1$, $\mu$ is $K_H$-invariant and thus by the first part of Lemma~\ref{divisions}, 
$z_n$ divides the first $b_H$ coefficients of $\mu$.

We now consider the action of the diagonal group element $s$ on $\mu$.
Since $\mu$ is $\chi$-invariant and $\lambda^{-a_H}=\chi(s)$,
$$
 \begin{aligned}
 \lambda^{-a_H}\sum_i u_i\ dz_i = \chi(s) \mu
 &= s(\mu) = \sum_{i \neq n} s(u_i)\ dz_i
    +  \lambda^{-1} s(u_n)\ dz_n. \\
\end{aligned}
$$
Consider the $i$-th coefficient, with $i<n$.
Then $s(u_i)=\lambda^{-a_H}u_i$ and hence $z_n^{a_H}$ divides $u_i$.
Thus
the first $n-1$ coefficients of $\mu$
are divisible by $z_n^{a_H}$.
And when $\chi(s)=1$, more is true:
the first $b_H$ coefficients of
$\mu$ are in fact divisible by $z_n^{e_H}$ by the second part of Lemma~\ref{divisions}.
(Recall that $0\leq a_H<e_H$.)

Thus, $\om_1\wedge\cdots\wedge\om_n$
is divisible by
$l_H^{a_H(n-1)}$. And when $\chi(s)=1$,
then $\om_1\wedge\cdots\wedge\om_n$
is divisible by $l_H^{e_H b_H}$.
Since $Q(\tilde{\A}_\chi)$ and $Q_\chi^{n-1}$ have no common factors, it follows that 
$Q(\tilde{\A}_\chi)\, Q_\chi^{n-1}$ divides $\om_1\wedge\cdots\wedge\om_n$.

Because $Q(\tilde{\A}_\chi)$ is invariant and $Q_\chi$ is $\chi$-invariant,
$\om_1\wedge\cdots\wedge\om_n\,(Q(\tilde{\A}_\chi)\, Q_\chi^{n-1})^{-1} $ 
is a $\chi$-invariant $n$-form, i.e., equals $f \vol$ for some 
$(\chi\det)$-invariant polynomial $f$ (as $\vol$ is $\det^{-1}$-invariant).
By Proposition~\ref{chistanley}, $f$ is divisible by $Q_{\chi\det}$.  
So $\om_1\wedge\cdots\wedge\om_n$
is in fact divisible by $Q(\tilde{\A}_\chi)\, Q_\chi^{n-1}\, Q_{\chi\det}$.
\end{proof}
%%%%%%%%%%%%%%%%%%%%%%%%%%%%%%%%%%%%%%%%%%%%%%%%%%%%%%%%%%%%%%%
%%%%%%%%%%%%%%%%%%%%%%%%%%%%%%%%%%%%%%%%%%%%%%%%%%%
\begin{thm}
\label{chisolomon}
Suppose $G\leq \Gl_n(\F)$ is a reflection group.
Let $\chi$ be a linear character of $G$.
Suppose $\om_1,\ldots, \om_n$ are $\chi$-invariant $1$-forms with 
$$
\om_1 \wedge\cdots\wedge \om_n \doteq Q(\tilde{\A}_\chi)\, Q_\chi^{n-1}\, Q_{\chi\det}\ \vol.
$$
Then $\om_1, \ldots, \om_n$ is a basis for
the set of $\chi$-invariant $1$-forms as a free-module 
over the ring of invariants, $\F[V]^G$:
$$
(\Om^1)_\chi^G = \bigoplus_i \F[V]^G \ \om_i.
$$
\end{thm}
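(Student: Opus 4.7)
The plan is to imitate the proof of Theorem~\ref{solomon} almost verbatim, substituting Lemma~\ref{chidivides} for Lemma~\ref{divides} and Proposition~\ref{chistanley} for Proposition~\ref{stanley}. The only genuinely new point is ensuring that the coefficients in a putative expansion are $G$-invariant (rather than merely rational), which will fall out of the $\chi$-invariance of both sides.

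First I would observe that the hypothesis $\om_1\wedge\cdots\wedge\om_n\doteq Q(\tilde{\A}_\chi)\,Q_\chi^{n-1}\,Q_{\chi\det}\ \vol$ guarantees this wedge product is nonzero, hence $\om_1,\ldots,\om_n$ are linearly independent over $\F(V)$. In particular they are a basis of $\F(V)\otimes_{\F[V]}\Om^1$, which has dimension $n$. Given an arbitrary $\chi$-invariant $1$-form $\om$, I would therefore write $\om=\sum_i h_i\om_i$ with uniquely determined coefficients $h_i\in\F(V)$. The goal is to prove $h_i\in \F[V]^G$ for every $i$.

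Next I would verify that $h_i\in \F(V)^G$. Applying any $g\in G$ gives $\chi(g)\om = g(\om) = \sum_i g(h_i)\,g(\om_i) = \chi(g)\sum_i g(h_i)\,\om_i$, so $\sum_i (h_i-g(h_i))\om_i=0$; by $\F(V)$-linear independence, $g(h_i)=h_i$ for all $g$, placing $h_i$ in $\F(V)^G$. Now fix some $i$ with $h_i\neq 0$ and consider
\[
\om\wedge\om_1\wedge\cdots\wedge\widehat{\om_i}\wedge\cdots\wedge\om_n
\ =\ \pm\, h_i\,\om_1\wedge\cdots\wedge\om_n
\ \doteq\ h_i\,Q(\tilde{\A}_\chi)\,Q_\chi^{n-1}\,Q_{\chi\det}\ \vol.
\]
Since $\om$ and the $\om_j$ are all $\chi$-invariant $1$-forms, Lemma~\ref{chidivides} applies to the left-hand side: the wedge product is divisible by $Q(\tilde{\A}_\chi)\,Q_\chi^{n-1}\,Q_{\chi\det}$ as an element of $\F[V]\otimes\Lambda^n V^*$. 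Comparing with the right-hand side forces $h_i\in\F[V]$, and combined with $h_i\in\F(V)^G$ this yields $h_i\in\F[V]\cap\F(V)^G=\F[V]^G$, completing the proof.

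There is no real obstacle here, since the two main inputs (Lemma~\ref{chidivides} and Proposition~\ref{chistanley}) have already been established; the only subtlety worth flagging is that Lemma~\ref{chidivides} requires all $n$ forms in the wedge to be $\chi$-invariant, which is exactly the situation we have. The argument also shows the sum $\sum_i \F[V]^G\om_i$ is direct because the $\om_i$ are $\F(V)$-linearly independent, so $(\Om^1)_\chi^G=\bigoplus_i \F[V]^G\,\om_i$ as claimed.
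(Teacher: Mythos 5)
Your proposal is correct and follows essentially the same route as the paper's own proof: expand $\om$ in the $\om_i$, wedge against the complementary forms, and invoke Lemma~\ref{chidivides} together with the hypothesis on $\om_1\wedge\cdots\wedge\om_n$ to conclude the coefficients are polynomial. The only (harmless) difference is that you verify $h_i\in\F(V)^G$ explicitly by applying $g\in G$ to the expansion, whereas the paper obtains this by noting the $\om_i$ span $\F(V)^G\otimes_{\F[V]^G}(\Om^1)_\chi^G$ over $\F(V)^G$.
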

\begin{proof}
The proof is completely analogous to the proof of Theorem~\ref{solomon}:
Since $\om_1\wedge\cdots\wedge\om_n$ is nonzero, the forms $\om_1, \ldots, \om_n$
are linearly independent over the field of fractions $\F(V)^G$ of $\F[V]^G$, and thus span $\F(V)^G\otimes_{\F[V]^G}(\Om^1)_\chi^G$ 
as a vector space over $\F(V)^G$.
Let $\om$ be a $\chi$-invariant $1$-form, and write $\om = \sum_i h_i \om_i$
with coefficients $h_i \in \F(V)^G$.  Fix some $i$ for which $h_i\neq 0$ and consider
$\om\wedge\om_{1}\wedge\cdots\wedge\om_{i-1}\wedge\om_{i+1}\wedge\cdots\wedge \om_n$.
Up to a nonzero scalar, this equals 
$$h_i\, \om_1\wedge\cdots\wedge\om_n\doteq h_i\,  Q(\tilde{\A}_\chi)\, Q_\chi^{n-1}\, Q_{\chi\det}\, \vol.$$ 
By Lemma~\ref{chidivides} above, the product is divisible by $Q(\tilde{\A}_\chi)\, Q_\chi^{n-1}\, Q_{\chi\det}$, 
i.e., $h_i \in \F[V]^G$. 
\end{proof}
%%%%%%%%%%%%%%%%%%%%%%%%%%%%%%%%%%%%%%%%%%%%%%%%%%%%%%%%%%%%%%%%%%%%%%%%%%%%%%%%

\bibliography{HartmannSheplerArxiv}

\end{document}